\newcommand{\mc}[1]{\mathcal{#1}}
\newcommand{\bs}[1]{\boldsymbol{#1}}
\newcommand\N{\mathbb N}
\newcommand\sF{\mathcal F}
\newcommand\sS{\mathcal S}
\newcommand\sG{\mathcal G}
\newcommand\sC{\mathcal C}
\newcommand\q{\quad}
\newcommand\qq{\qquad}
\newcommand\lam{\lambda}
\newcommand\La{\Lambda}
\newcommand\bmu{{\bm{\mu}}}
\newcommand\gcev[1]{{\overset{{}_\shortleftarrow}{#1}}}
\newcommand\gvec[1]{{\overset{{}_\shortrightarrow}{#1}}}
\long\def\red#1{{\color{red}{#1}}}
\renewcommand{\P}{\mathbb{P}}
\newcommand\FF{\mathcal{F}} 
\newcommand\sP{\mathcal{P}}
\newcommand{\sss}[1]{\scriptscriptstyle{#1}}
\newcommand{\blank}[1]{}
\newcommand{\Eq}[1]{\overset{\sss{\bs{#1}}}{\sim}}
\newcommand{\Eqt}[2]{\overset{\sss{\bs{#1}_{#2}}}{\sim}}
\newcounter{thmcounter}
\newcommand\ol{\overline}
\newcommand\oo{\infty}
\newcommand\supp{\text{\rm supp}}
\def\resp{respectively}
\newcounter{mycount1}\newcounter{mycount2}\newcounter{mycount3}\newcounter{mycount}
\newenvironment{letlist}{\begin{list}{\rm(\alph{mycount3})}%
   {\usecounter{mycount3}\labelwidth=1cm\itemsep 0pt}}{\end{list}}
\newcommand\rank{\text{\rm rank}}
\title{Non-coupling from the past\\ \red{(Revised 16 October 2025)}} 
\author{Geoffrey R.\ Grimmett and Mark Holmes}
\institute{Geoffrey Grimmett \at Statistical Laboratory, Centre for
Mathematical Sciences, Cambridge University, Wilberforce Road,
Cambridge CB3 0WB, UK, \email{grg@statslab.cam.ac.uk}\\ and \\
School of Mathematics \&\ Statistics, The University of Melbourne, 
Parkville, VIC 3010, Australia
\and Mark Holmes \at School of Mathematics \&\ Statistics, The University of Melbourne, 
Parkville, VIC 3010, Australia. \email{holmes.m@unimelb.edu.au}
}
\begin{document}
\maketitle
\abstract{
The method of \lq coupling from the past'
permits exact sampling
from the invariant distribution of a Markov chain on a finite state space.
The coupling is successful whenever the stochastic dynamics are such that
there is coalescence of all trajectories. The issue of 
the coalescence or non-coalescence of
trajectories of a finite state space Markov chain 
is investigated in this note. The notion of the \lq coalescence number' 
$k(\mu)$ of a Markovian coupling  $\mu$ is introduced, and results
are presented concerning the set $K(P)$ of coalescence numbers
of couplings corresponding to a given transition matrix $P$.
\emph{\red{Note: This is a revision of the original published version, in which part of
Theorem \ref{thm:7} has been removed. A correction may be found in \cite[Thm 5.3]{GH}.}}}


\section{Introduction}\label{sec:1}
The method of \lq coupling from the past' (CFTP)
was introduced by Propp and Wilson \cite{PW2,PW3,WP1} in order to sample from the invariant distribution of an 
irreducible
Markov chain on a finite state space. It has attracted great interest
amongst theoreticians and practitioners, and there is an extensive associated literature
(see, for example, \cite{Huber,wil}).

The general approach of CFTP is as follows. Let $X$ be an irreducible  
Markov chain on a finite state space $S$ with transition matrix $P=(p_{i,j}:i,j\in S)$, 
and let $\pi$ be the unique invariant distribution 
(see \cite[Chap.\ 6]{GS} for a general account of the theory
of Markov chains). 

Let $\FF_S$ be the set of functions from $S$ to $S$, and
let $\sP_S$ be the set of all irreducible 
 stochastic
matrices on the finite set $S$.  We write $\N$ for the set $\{1,2,\dots\}$
of natural numbers, and $\P$ for a 
generic probability measure.

\begin{definition}
A probability measure $\mu$ on $\FF_S$ is \emph{consistent} with $P\in\sP_S$,
in which case we say that the pair $(P,\mu)$ is \emph{consistent},  if
\begin{equation}\label{eq:4}
p_{i,j} = \mu\bigl(\{f\in\FF_S: f(i)=j\}\bigr), \qq i,j\in S.
\end{equation}
Let $\mc{L}(P)$ denote the set of probability measures $\mu$ on $\FF_S$ 
that are consistent with $P\in\sP_S$.  
\end{definition}

Let $P\in \sP_S$ and $\mu\in\mc L(P)$. 
The measure $\mu$ is called a \emph{grand coupling} of
$P$.
Let $F=(F_s:s\in\N)$ be a vector of independent samples from $\mu$,  let 
$\gcev F_t$ denote the composition $F_1 \circ F_{2} \circ \dots \circ F_t$,
and define the \emph{backward coalescence time} 
\begin{equation}\label{eq:cotime}
C=\inf\bigl\{t:    \gcev F_t(\cdot)\text{ is a constant function}\bigr\}.
\end{equation}
We say that \emph{backward coalescence occurs} 
if $C<\oo$.
On the event $\{C<\oo\}$, $\gcev F_C$ may be regarded
as a random state.

The definition of coupling may seem confusing on first encounter. 
The function $F_1$ describes transitions during one step of the chain from time $-1$ to time $0$, as illustrated
in Figure \ref{fig:1}. If $F_1$ is not a constant function, we move back one step  in time to $-2$,
and consider the composition $F_1\circ F_2$. 
This process is iterated, moving one step back in time 
at each stage, until the earliest (random) $C$ such that
the iterated function $\gcev F_C$ is constant.
This $C$ (if finite) is the time to backward coalescence. 

Propp and Wilson proved the following fundamental theorem.

\begin{figure}\label{fig:1}
\centerline{\includegraphics[width=0.7\textwidth]{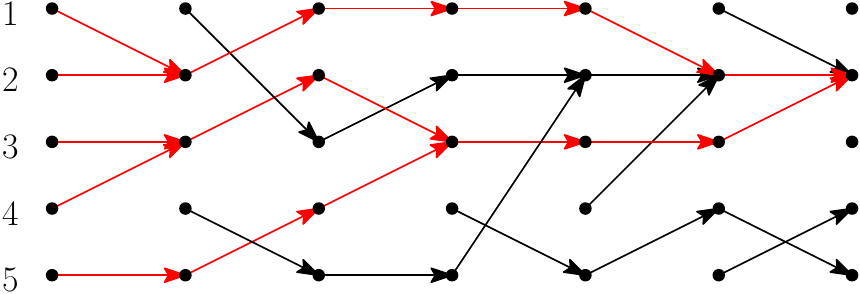}}
\caption{An illustration of coalescence of trajectories in CFTP with $|S|=5$.}
\end{figure}

\begin{theorem}[\cite{PW2}]\label{thm:pw}
Let  $P\in\sP_S$ and $\mu\in \mc{L}(P)$.
Either $\P(C<\oo)=0$ or $\P(C<\oo)=1$.
If it is the case that $\P(C<\oo)=1$, then the random state $\gcev F_C$ has law $\pi$.
\end{theorem}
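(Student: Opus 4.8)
The plan is to establish the two assertions in turn, relying throughout on the semigroup relation $\gcev F_{t+1} = \gcev F_t \circ F_{t+1}$ and on a time-reversal symmetry of the i.i.d.\ sequence $F$. The basic observation is that if $\gcev F_t$ is constant then so is $\gcev F_t \circ F_{t+1} = \gcev F_{t+1}$, and indeed it is the \emph{same} constant function; hence $\{\gcev F_t\text{ is constant}\}$ is increasing in $t$, with union $\{C<\oo\}$, so that $\P(C<\oo) = \lim_{t\to\oo}\P(\gcev F_t\text{ is constant})$.

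For the zero--one law I would suppose $\P(C<\oo)>0$ and fix $T$ with $q:=\P(\gcev F_T\text{ is constant})>0$. Splitting $F$ into consecutive disjoint blocks $B_k := F_{(k-1)T+1}\circ\cdots\circ F_{kT}$ produces i.i.d.\ copies of $\gcev F_T$, and $\gcev F_{kT} = B_1\circ\cdots\circ B_k$. Since a single constant block $B_j$ (with $j\le k$) renders the whole composition constant, independence gives $\P(\gcev F_{kT}\text{ is constant}) \ge 1-(1-q)^k \to 1$, forcing $\P(C<\oo)=1$.

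For the second assertion, assume $\P(C<\oo)=1$. The key is to describe the law of $\gcev F_t(x)$ for a fixed state $x$ in two different ways. On one hand, because $F_1,\dots,F_t$ are i.i.d., the backward composition $\gcev F_t=F_1\circ\cdots\circ F_t$ and the forward composition $\gvec F_t=F_t\circ\cdots\circ F_1$ have the same distribution; and setting $X_0=x$, $X_s=F_s(X_{s-1})$, the sequence $(X_s)$ is a Markov chain with transition matrix $P$ (as $F_s$ is independent of $X_{s-1}$ and, by \eqref{eq:4}, $\P(F_s(i)=j)=p_{i,j}$), with $X_t=\gvec F_t(x)$. Hence $\gcev F_t(x)$ has law $(P^t)_{x,\cdot}$. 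On the other hand, by the monotonicity above, on $\{C<\oo\}$ we have $\gcev F_t(x)=\gcev F_C$ for every $t\ge C$ and every $x$; since $\P(C<\oo)=1$ this gives $\gcev F_t(x)\to\gcev F_C$ almost surely, so $(P^t)_{x,j}=\P(\gcev F_t(x)=j)\to\P(\gcev F_C=j)$ for each $j\in S$. Writing $\nu_j:=\P(\gcev F_C=j)$, the limit $\lim_t (P^t)_{x,\cdot}=\nu$ therefore exists and is independent of $x$. I would conclude by noting that $\nu P=\lim_t (P^t)_{x,\cdot}P=\lim_t (P^{t+1})_{x,\cdot}=\nu$, so $\nu$ is invariant; uniqueness of the invariant distribution of the irreducible finite chain $P$ then yields $\nu=\pi$, i.e.\ $\gcev F_C$ has law $\pi$.

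The main obstacle is conceptual rather than computational: the random variable that converges almost surely is the backward composition $\gcev F_t(x)$, while the random variable whose law is transparently a Markov-chain distribution is the forward composition $\gvec F_t(x)$. These are genuinely different objects — in particular $\gcev F_t(x)$ is not itself a Markov chain in $t$ — and the argument succeeds exactly because they coincide in distribution for each fixed $t$. One must keep the two roles cleanly separated; it is also worth remarking that the almost-sure convergence of the backward chain is what forces $(P^t)_{x,\cdot}$ to converge at all, consistent with the fact that the hypothesis $\P(C<\oo)=1$ tacitly excludes periodic $P$.
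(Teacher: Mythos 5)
Your proof is correct: the monotone-events observation, the block decomposition $\gcev F_{kT}=B_1\circ\cdots\circ B_k$ giving the zero--one law, and the identification of $\lim_t(P^t)_{x,\cdot}$ via the distributional identity $\gcev F_t(x)\overset{d}{=}\gvec F_t(x)$ together with the a.s.\ stabilisation of $\gcev F_t(x)$ at $\gcev F_C$ constitute the standard Propp--Wilson argument. The paper itself states this theorem without proof, citing \cite{PW2}, so there is no in-paper argument to compare against; your write-up matches the original proof in substance, and your closing remark correctly isolates the one genuinely delicate point (the backward composition converges a.s.\ but is not a Markov chain, while the forward one is a Markov chain but does not converge a.s.).
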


Here are two areas of application of CFTP. In the first, one begins with a recipe 
for a certain probability measure $\pi$ on $S$, for example as the 
posterior distribution of a Bayesian analysis.  In seeking a sample from $\pi$,
one may find an aperiodic transition matrix $P$ having $\pi$ as unique invariant distribution, and
then run CFTP on the associated Markov chain. In a second situation
that may arise in a physical model, one 
begins with a Markovian dynamics with associated transition matrix $P\in\sP_S$, and
uses CFTP to sample from the invariant distribution. In the current work,
we shall assume that the transition matrix $P$ is specified, and that $P$ is
finite and irreducible.

Here is a summary of the work presented here.
In Section \ref{sec:2}, we discuss the phenomena of backward and forward 
coalescence, and we define the coalescence number of a Markov coupling. 
Informally, the coalescence number is the (deterministic) limiting number of un-coalesced trajectories of the coupling. 
Theorem \ref{thm:4} explains the relationship between the coalescence number
and the ranks of products of extremal elements in a convex representation of the stochastic matrix
$P$. The question is posed of determining the set $K(P)$ of coalescence numbers of couplings
consistent with a given $P$. A sub-family of couplings, termed \lq block measures', is studied
in Section \ref{sec:3}.  
These are couplings for which there is a fixed set of blocks (partitioning the state space), such that blocks of states are mapped to blocks of states, and such that coalescence occurs within but not between blocks. It is shown in Theorem \ref{lem:3}, via Birkhoff's convex representation theorem
for doubly stochastic matrices, that $|S|\in K(P)$ if and only if 
$P$ is doubly stochastic. Some further results about $K(P)$ are presented
in Section \ref{sec:4}.

\section{Coalescence of trajectories}\label{sec:2}

CFTP relies upon almost-sure backward coalescence, which is to say that $\P(C<\oo)=1$,
where $C$ is given in \eqref{eq:cotime}. 
For given $P\in \sP_S$,
the occurrence (or not) 
of coalescence depends on the choice of $\mu \in \mc L(P)$; see for example, Example \ref{eq:1}.

We next introduce the notion of \lq forward coalescence', which is to be considered as \lq \emph{coalescence}' but
with the difference that time runs
forwards rather than backwards. 
As before, let $P\in\sP_S$, $\mu\in\mc L(P)$, and let $F=(F_s: s \in \N)$ be  an independent sample from $\mu$.
For $i \in S$, define the Markov chain $X^i=(X_t^i: t \ge 0)$ by $X_t^i=\gvec F_t(i)$ where
$\gvec F_t=F_t \circ F_{t-1}\circ \dots \circ F_1$. Then 
	 $(X^i:i\in S)$ is
a family of coupled Markov chains, running forwards in time, each having transition matrix $P$,
and such that $X^i$ starts in state $i$.

The superscript $\shortrightarrow$ (\resp, $\shortleftarrow$)
is used to indicate that time is running forwards (\resp, backwards).
For $i,j\in S$, we say that $i$ and $j$ \emph{coalesce} if there exists $t$ such that  
$X_t^i = X_t^j$. We say that \emph{forward coalescence occurs} if,
for all pairs $i,j \in S$, $i$ and $j$ coalesce. The \emph{forward coalescence time}
is given by
\begin{equation}\label{eq:cotime2}
T=\inf\{t\ge 0: X_t^i=X_t^j \text{ for all } i,j \in S\}.
\end{equation}
Clearly, if $P$ is periodic then $T=\infty$ a.s.\ for any $\mu \in \mc{L}(P)$.  
A simple but important observation is that $C$ and $T$ have the same distribution.

\begin{theorem}\label{thm:basic}
Let $P\in \sP_S$ and $\mu\in\mc L(P)$.
The backward coalescence time $C$ and the forward coalescence time $T$ have the same distribution.
\end{theorem}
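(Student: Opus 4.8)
The plan is to show that $C$ and $T$ have the same distribution by proving that, for each fixed $t$, the event $\{C \le t\}$ and the event $\{T \le t\}$ have the same probability. The cleanest route is to construct, for each $t$, a measure-preserving bijection (or a distributional identity) between the random functions governing backward and forward coalescence up to time $t$. Recall that $\gcev F_t = F_1 \circ F_2 \circ \dots \circ F_t$ whereas $\gvec F_t = F_t \circ F_{t-1} \circ \dots \circ F_1$, so the two are built from the \emph{same} independent sample $F=(F_s:s\in\N)$ but composed in opposite orders. Since the functions $F_1,\dots,F_t$ are i.i.d.\ from $\mu$, the ordered tuple $(F_1,\dots,F_t)$ has the same law as the reversed tuple $(F_t,\dots,F_1)$.

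First I would fix $t$ and observe that $C \le t$ if and only if $\gcev F_t$ is a constant function, while (by a short monotonicity argument) $T \le t$ if and only if $\gvec F_t$ is a constant function; the key point for the forward side is that once all trajectories have coalesced they remain coalesced, so $\{T \le t\} = \{\gvec F_t(\cdot)\text{ is constant}\}$. Next I would exploit the reversal symmetry of the i.i.d.\ sequence: applying the permutation that reverses the first $t$ coordinates sends $(F_1,\dots,F_t)$ to an equally-distributed tuple, and under this relabelling the composition $F_1\circ F_2\circ\dots\circ F_t$ becomes $F_t\circ F_{t-1}\circ\dots\circ F_1$. Hence $\gcev F_t$ and $\gvec F_t$ have the same distribution as random elements of $\FF_S$. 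In particular, the probability that $\gcev F_t$ is constant equals the probability that $\gvec F_t$ is constant, giving $\P(C \le t) = \P(T \le t)$ for every $t$, and therefore $C$ and $T$ are equal in distribution.

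The one genuine subtlety, which I expect to be the main thing to verify carefully, is the equivalence $\{T \le t\} = \{\gvec F_t \text{ is constant}\}$. The definition \eqref{eq:cotime2} of $T$ asks for a time at which \emph{all} pairs have coalesced, and one must check that forward coalescence is absorbing: if $X^i_s = X^j_s$ for some $s$, then $X^i_{s'} = X^j_{s'}$ for all $s' \ge s$, because both chains are driven by the same maps $F_{s+1}, F_{s+2}, \dots$. This ensures $\gvec F_t$ is constant precisely when $T \le t$, and rules out the possibility that trajectories merge and later separate. With that absorption property in hand, the reversal argument is a clean distributional identity and the result follows.
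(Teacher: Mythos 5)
Your proposal is correct and follows essentially the same route as the paper: both identify $\{C\le t\}$ with the event that $\gcev F_t$ is constant, identify $\{T\le t\}$ with the event that $\gvec F_t$ is constant, and then use the exchangeability of the i.i.d.\ tuple $(F_1,\dots,F_t)$ under order reversal to equate the two probabilities. Your explicit verification that forward coalescence is absorbing is a point the paper leaves implicit, but it is the same argument.
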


\begin{proof}
Let $(F_i: i \in \N)$ be an independent sample from $\mu$.
For $t \ge 0$, we have 
$$
\P(C \le t) = \P\bigl(\gcev F_t(\cdot) \text{ is a constant function}\bigr).
$$
By reversing the order of the functions $F_1,F_2,\dots,F_t$, 
we see that this equals $\P(T\le t)=\P(\gvec F_t(\cdot) \text{ is a constant function})$.
\qed\end{proof}

\begin{example}\label{eq:1}
Let $S=\{1,2,\dots,n\}$ where $n \ge 2$, and let $P_n=(p_{i,j})$
be the constant matrix with entries $p_{i,j}=1/n$ for  $i,j \in S$. 
Let $F=(F_i: i \in \N)$ be an independent sample from $\mu\in\mc L(P_n)$.
\begin{letlist}
\item If each $F_i$ is a uniform random permutation of $S$,
then $T\equiv \oo$ and $\gvec F_t(i)\ne \gvec F_t(j)$ for all $i \ne j$ and $t \ge 1$.
\item If  $(F_1(i):i \in S)$ are independent and uniformly distributed on $S$,
then $\P(T<\oo)=1$.
\end{letlist}
In this example, there exist measures $\mu \in \mc L(P_n)$
such that either {\rm(a)} a.s.\ no pairs of states coalesce, or {\rm(b)} a.s.\ forward
coalescence occurs.
\end{example} 

For $g \in \FF_S$, we let $\Eqt{g}{}$ be the equivalence relation on $S$ given by 
$i\Eqt{g}{}j$ if and only if $g(i)=g(j)$. For $f=(f_t: t \in \N)\subseteq \FF_S$
and $t \ge 1$, we write
$$
\gcev f_t=f_1\circ f_{2}\circ\dots\circ f_t,\q \gvec f_t=f_t\circ f_{t-1}\circ\dots\circ f_1.
$$
Let $k_t(\gcev{ f})$ (\resp, $k_t(\gvec{ f})$) denote the number of equivalence classes of the
relation $\Eqt{\gcev f}{t}$ (\resp, $\Eqt{\gvec f}{t}$).  
Similarly, we define the equivalence relation $\Eq{\gcev f}$ 
on $S$ by $i \Eq{\gcev f} j$ if and only if $i\Eqt{\gcev f}{t}j$ for some $t \in \N$, 
and we let $k(\gcev f)$ be the number of equivalence classes of $\Eq{\gcev f}$ (and similarly for $\gvec f$).  
We call $k(\gcev f)$  the \emph{backward} \emph{coalescence number} of $\gcev f$,
and likewise $k(\gvec f)$ the \emph{forward coalescence number} of $\gvec f$. 
The following lemma is elementary.

\begin{lemma}\label{lem0}
\mbox{}
\begin{letlist}
\item We have that $k_t(\gcev f)$  and $k_t(\gvec f)$ are monotone  non-increasing in $t$.
Furthermore, $k_t(\gcev f)=k(\gcev f)$ and $k_t(\gvec f)=k(\gvec f)$ 
for all large $t$.
\item Let $F=(F_s:s \in \N)$ be independent and identically distributed 
elements in $\sF_S$.
Then $k_t(\gcev F)$ and $k_t(\gvec F)$ are equidistributed, and similarly $k(\gcev F)$ and $k(\gvec F)$ are equidistributed.
\end{letlist}
\end{lemma}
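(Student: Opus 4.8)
The plan is to prove part (a) by a purely deterministic monotonicity argument and to derive part (b) from the exchangeability of an i.i.d.\ sequence, using (a) to pass from fixed $t$ to the limit.

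For part (a) I would first note that the number of classes is just an image size: since $i\Eqt{\gcev f}{t}j$ means $\gcev f_t(i)=\gcev f_t(j)$, the classes of $\Eqt{\gcev f}{t}$ are the nonempty fibres of $\gcev f_t$, so $k_t(\gcev f)=|\gcev f_t(S)|$, and similarly $k_t(\gvec f)=|\gvec f_t(S)|$. Monotonicity then comes from the two composition rules. For the backward iterates $\gcev f_{t+1}=\gcev f_t\circ f_{t+1}$, so $\gcev f_{t+1}(S)=\gcev f_t(f_{t+1}(S))\subseteq\gcev f_t(S)$ and hence $k_{t+1}(\gcev f)\le k_t(\gcev f)$. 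For the forward iterates $\gvec f_{t+1}=f_{t+1}\circ\gvec f_t$, so $\gvec f_{t+1}(S)=f_{t+1}(\gvec f_t(S))$, and applying a function cannot increase cardinality, giving $k_{t+1}(\gvec f)\le k_t(\gvec f)$. Each is a non-increasing sequence of integers in $\{1,\dots,|S|\}$ and is therefore eventually constant.

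It remains to identify the eventual values with $k(\gvec f)$ and $k(\gcev f)$. The forward case is the clean one: $\gvec f_{t+1}=f_{t+1}\circ\gvec f_t$ shows that $\gvec f_t(i)=\gvec f_t(j)$ forces $\gvec f_{t+1}(i)=\gvec f_{t+1}(j)$, so the relations $\Eqt{\gvec f}{t}$ form an increasing chain of equivalence relations whose union is $\Eq{\gvec f}$; since $S$ is finite this chain stabilises, so $\Eq{\gvec f}=\Eqt{\gvec f}{t}$ and thus $k(\gvec f)=k_t(\gvec f)$ for all large $t$. The backward relations $\Eqt{\gcev f}{t}$ need not be nested (only their image sizes are monotone), so there I would take $k(\gcev f)$ to be the eventually constant value of $k_t(\gcev f)$. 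This mild asymmetry between the two directions is the one place that warrants care.

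For part (b) the engine is that $F=(F_s:s\in\N)$ is i.i.d., so for each fixed $t$ the reversed block $G=(F_t,F_{t-1},\dots,F_1)$ has the same law as $(F_1,\dots,F_t)$. Since $\gvec G_t=G_t\circ\dots\circ G_1=F_1\circ\dots\circ F_t=\gcev F_t$, this yields $\gcev F_t\overset{d}{=}\gvec F_t$ as random elements of $\FF_S$, and applying the deterministic map $g\mapsto|g(S)|$ gives $k_t(\gcev F)\overset{d}{=}k_t(\gvec F)$. To reach the limiting statement I would invoke part (a): the events $\{k_t(\gcev F)\le m\}$ increase in $t$ to $\{k(\gcev F)\le m\}$, so $\P(k(\gcev F)\le m)=\lim_t\P(k_t(\gcev F)\le m)=\lim_t\P(k_t(\gvec F)\le m)=\P(k(\gvec F)\le m)$ for every $m$, i.e.\ $k(\gcev F)\overset{d}{=}k(\gvec F)$. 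The calculations throughout are routine; the only genuinely delicate points are the backward/forward asymmetry in part (a) and the interchange of limit and distribution in part (b), the latter being exactly what the monotonicity of (a) is there to license.
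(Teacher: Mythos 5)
Your proof is correct and follows essentially the same route as the paper's (much terser) argument: in part (a) you identify $k_t$ with the image size of the iterated map, which is non-increasing under composition and integer-valued, hence eventually constant; in part (b) you use the reversal of an i.i.d.\ block exactly as in the paper's proof of Theorem \ref{thm:basic}, then pass to the limit. One point you raise deserves emphasis, because the paper glosses over it. The asymmetry you flag in part (a) is real: the backward relations $\Eqt{\gcev f}{t}$ are not nested, since a pair identified at time $t$ can be separated at time $t+1$. For instance, with $S=\{1,2,3\}$, $f_1(1)=f_1(2)=1$, $f_1(3)=2$, and $f_2(1)=1$, $f_2(2)=3$, $f_2(3)=2$, the classes of $\Eqt{\gcev f}{t}$ are $\{1,2\},\{3\}$ at $t=1$ but $\{1,3\},\{2\}$ at $t=2$. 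Consequently the relation $\Eq{\gcev f}$ as literally defined (related at \emph{some} time $t$) need not be transitive, and the class count of its transitive closure can be strictly smaller than the eventual value of $k_t(\gcev f)$; the backward half of the second claim in (a) is therefore problematic under the paper's literal definition. Your resolution --- taking $k(\gcev f)$ to be the eventually constant value of $k_t(\gcev f)$ --- is the sensible reading and is all that is needed later, since from Section \ref{sec:conos} onwards only forward coalescence is used. With that reading your argument is complete, and your explicit interchange of limit and distribution in part (b), via the increasing events $\{k_t\le m\}$, fills in a step the paper omits.
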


\begin{proof}
(a) The first statement holds by consideration of the definition, and the second since  
$k(\gcev F)$ and $k(\gvec F)$ are integer-valued.

(b) This holds as in the proof of Theorem \ref{thm:basic}.
\qed\end{proof}

\section{Coalescence numbers}\label{sec:conos}

In light of Theorem \ref{thm:basic} and Lemma \ref{lem0},
we henceforth consider only Markov chains running in \emph{increasing positive time}.
\emph{Henceforth, expressions involving the word \lq coalescence'  
shall refer to \emph{forward} coalescence.}
Let $\mu$ be a probability measure on $\FF_S$, 
and let $\supp(\mu)$ denote the support of $\mu$.
Let 
$F=(F_s:s \in \N)$ be a vector of independent and identically distributed 
random functions, each with law $\mu$.  The law of $F$ is the product measure
$\bmu=\prod_{i\in \N}\mu$. The coalescence time $T$ is given by \eqref{eq:cotime2}, and 
the term \emph{coalescence number} refers to the quantities $k_t(\gvec F)$ and $k(\gvec F)$,
which we denote henceforth by $k_t(F)$ and $k(F)$, \resp.

\begin{lemma}
\label{lem:k_det}
Let $\mu$, $\mu_1$, $\mu_2$ be probability measures on $\FF_S$.
\begin{letlist}
\item Let $F=(F_s: s \in\N)$ be a sequence of independent and identically distributed 
functions each with law $\mu$.
We have that $k(F)$ is $\bmu$-a.s.\ constant,
and we write $k(\mu)$ for the almost surely  constant value of $k(F)$.
\item If $\supp(\mu_1) \subseteq \supp(\mu_2)$, then $k(\mu_1) \ge k(\mu_2)$.
\item If $\supp(\mu_1)=\supp(\mu_2)$, then $k(\mu_1)=k(\mu_2)$.
\end{letlist}
\end{lemma}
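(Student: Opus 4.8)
The plan is to prove the three statements in order, since each builds on the structure of the previous one. For part (a), the key point is that $k(F)$ is a \emph{tail-measurable} function of the i.i.d.\ sequence $F=(F_s:s\in\N)$. First I would observe that the equivalence relation $\Eq{\gvec F}$ is insensitive to the values of any finite initial segment $F_1,\dots,F_m$ of the sequence: because $\gvec f_t = f_t\circ\dots\circ f_1$, any coalescence of two states $i,j$ that occurs at some time $t$ is preserved for all later times (by Lemma \ref{lem0}(a), $k_t$ is non-increasing), and moreover the eventual partition of $S$ into coalescence classes is determined by the asymptotic behaviour of the composition. More precisely, I would argue that $k(\gvec F)$ equals $k(\sigma^m\gvec F)$ for the shifted sequence $\sigma^m F=(F_{m+1},F_{m+2},\dots)$, since composing with a fixed initial block $\gvec F_m$ (a single function on the finite set $S$) can only merge equivalence classes that are already present, and any further merging under the full sequence can equally be achieved by the shifted sequence acting on the image $\gvec F_m(S)$. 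Hence $k(F)$ is measurable with respect to the tail $\sigma$-field $\bigcap_m \sigma(F_{m+1},F_{m+2},\dots)$, and by Kolmogorov's zero--one law it is $\bmu$-a.s.\ equal to a constant, which we name $k(\mu)$.

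For part (b), I would use the deterministic characterisation from Lemma \ref{lem0}(a) together with a coupling/monotonicity argument on supports. Since $\supp(\mu_1)\subseteq\supp(\mu_2)$, every function that can appear under $\mu_1$ can also appear under $\mu_2$. The quantity $k(\mu_2)$ is the \emph{minimal} number of coalescence classes achievable by \emph{any} admissible sequence drawn from $\supp(\mu_2)$ with positive probability, because $k(F)$ is a.s.\ constant and any sequence producing a smaller value on a positive-probability event would contradict that constancy. I would make this precise by noting that $k(\mu)$ can be described as
\begin{equation}\label{eq:kmin}
k(\mu)=\min\bigl\{k(\gvec f): f=(f_s:s\in\N),\ f_s\in\supp(\mu)\text{ for all }s\bigr\},
\end{equation}
where the minimum is attained because the sequence of admissible functions from the finite set $\supp(\mu)$ realising the a.s.\ value has positive $\bmu$-probability. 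Granting \eqref{eq:kmin}, the inclusion $\supp(\mu_1)\subseteq\supp(\mu_2)$ means the minimisation defining $k(\mu_2)$ ranges over a larger set of admissible sequences than that defining $k(\mu_1)$, so the minimum can only decrease; hence $k(\mu_1)\ge k(\mu_2)$.

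Part (c) is then immediate: if $\supp(\mu_1)=\supp(\mu_2)$, applying (b) in both directions gives $k(\mu_1)\ge k(\mu_2)$ and $k(\mu_2)\ge k(\mu_1)$, whence equality.

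The main obstacle I anticipate is justifying the variational formula \eqref{eq:kmin}, specifically the claim that the a.s.\ constant value $k(\mu)$ coincides with the minimum over all support-admissible sequences and that this minimum is actually attained on a positive-probability event. The two directions require care: one must show that no admissible sequence can beat the a.s.\ value (the a.s.\ value is a lower bound for $k(\gvec f)$ over admissible $f$, since otherwise a beating sequence would have positive probability and contradict constancy), and conversely that the a.s.\ value is itself attained by some admissible sequence (which follows because a $\bmu$-typical sequence is admissible). The finiteness of $S$, and hence of the number of distinct equivalence relations on $S$ and the relevance of only finitely many composition patterns, is what makes both the zero--one argument and the attainment of the minimum work; I would keep that finiteness explicit throughout.
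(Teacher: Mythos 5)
Your parts (b) and (c) are fine, and are essentially the paper's own argument: the paper deduces $k^*_1\ge k^*_2$ from the fact that the cylinder event $\{k_t(F)=k^*_1\}$ has positive probability under $\bmu_2$ whenever it does under $\bmu_1$, which is exactly the content of your variational formula \eqref{eq:kmin} (whose proof, as you note, presupposes part (a)). The genuine gap is in part (a). The identity $k(\gvec F)=k(\sigma^m\gvec F)$ on which your tail-measurability claim rests is false: writing $\gvec F_{m+u}=\gvec{(\sigma^m F)}_u\circ\gvec F_m$ gives only
$k_{m+u}(F)=\bigl|\gvec{(\sigma^m F)}_u\bigl(\gvec F_m(S)\bigr)\bigr|\le \bigl|\gvec{(\sigma^m F)}_u(S)\bigr|=k_u(\sigma^m F)$,
i.e.\ $k(F)\le k(\sigma^m F)$; the shifted sequence acts on all of $S$, not merely on the image $\gvec F_m(S)$, so the reverse inequality can fail. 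Concretely, for $S=\{1,2\}$ with $F_1=(11)$ and $F_s=(12)$ for $s\ge2$, we have $k(F)=1$ but $k(\sigma^1F)=2$. Thus $k(F)$ is not measurable with respect to $\bigcap_m\sigma(F_{m+1},F_{m+2},\dots)$, and Kolmogorov's zero--one law does not apply to it as invoked; it is a.s.\ equal to a tail-measurable (constant) function only once the conclusion of the lemma is already known, so the argument is circular as written.

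The approach can be repaired, but it needs one more idea. Set $Y_m=k(\sigma^m F)$. By the inequality above, $Y_m$ is non-decreasing in $m$, so $Y_\infty=\lim_m Y_m$ exists and \emph{is} tail-measurable, hence a.s.\ equal to some constant $c$. Since $F$ is i.i.d., every $Y_m$ has the same law as $Y_0=k(F)$; monotone a.s.\ convergence $Y_m\to c$ forces that common law to be the point mass at $c$, whence $k(F)=c$ a.s. The paper instead argues directly: with $k^*$ the smallest value taken by $k(F)$ with positive probability, choose $t$ with $\kappa=\bmu(k_t(F)=k^*)>0$; the events $\{k_t(F^m)=k^*\}$ for disjoint blocks $F^m=(F_{mt+s}:s\ge1)$ are independent with probability $\kappa$ each, so a.s.\ one occurs, and any single occurrence caps $k(F)$ at $k^*$ because the corresponding block of $t$ maps has image of size $k^*$. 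Either route closes the gap; as submitted, your zero--one law step does not.
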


We call $k(\mu)$ the \emph{coalescence number} of $\mu$.

\begin{proof}
(a) For $j \in \{1,2,\dots,n\}$, let $q_j=\bmu(k(F)=j)$, and 
$k^*=\min\{j:q_j>0\}$.  Then 
\begin{equation}\label{eq:3}
\bmu(k(F)\ge k^*)=1.
\end{equation}  
Moreover, we choose $t \in \N$ such that 
$$
\kappa:=\bmu(k_t(F)=k^*) \q\text{satisfies} \q \kappa>0.
$$

For $m \in\N$, write $F^m=(F_{mt+s}: s\ge 1)$.
The event $E_{t,m}=\{k_t(F^m)=k^*\}$ depends only on $F_{mt+1},F_{mt+2}\dots,F_{(m+1)t}$.
It follows that the events $\{E_{t,m} : m\in \N\}$ are independent, and each occurs with probability $\kappa$.  
Therefore, almost surely at least one of these events occurs, and hence
$\bmu(k(F)\le k^*)=1$.   By \eqref{eq:3},
this proves the first claim.  

(b) Assume $\supp(\mu_1) \subseteq \supp(\mu_2)$, and let
$k^*_i$ be the bottom of the ${\bmu_i}$-support of $k( F)$. 
Since, for large $t$,  ${\bmu_1}(k_t(F)=k^*_1)>0$,
we have also that ${\bmu_2}(k_t(F)=k^*_1)>0$, whence $k^*_1 \ge k^*_2$.
Part (c) is immediate.
\qed\end{proof}

Whereas $k(F)$ is a.s.\ constant (as in Lemma \ref{lem:k_det}(a)),
the equivalence classes of $\Eq{\gvec F}$ need not themselves be a.s.\ constant.
Here is an example of this, preceded by some notation.

\begin{definition}\label{def:fn}
Let $f \in \sF_S$ where $S=\{i_1,i_2,\dots,i_n\}$ is a finite ordered set. We write $f=(j_1j_2\ldots j_n)$ if
$f(i_r)=j_r$ for $r=1,2,\dots,n$. 
\end{definition}

\begin{example}\label{ex:7}
Take $S=\{1,2,3,4\}$ and any 
	consistent pair $(P,\mu)$
	with $\supp(\mu)=\{f_1,f_2,f_3,f_4\}$, where
	\begin{equation*}
	f_1 = (3434),\q f_2 = (4334),\q 
	f_3 = (3412),\q f_4 = (3421).
	\end{equation*}
Then $k(\mu)=2$ but the equivalence classes 
of $\Eq{\gvec F}$ may be either $\{1,3\}$, $\{2,4\}$ or $\{1,4\}$, $\{2,3\}$,
each having a strictly positive probability.
	The four functions $f_i$ are illustrated diagrammatically in Figure \ref{random_coalesce}.
\end{example}

\def\x{4}
\def\y{8}
\begin{figure}
	\begin{center}
		\begin{tikzpicture}[scale=.8]
		\node (AA1) at (-.5,0) {1};
		\node (AA2) at (-.5,-1) {2};
		\node (AA3) at (-.5,-2) {3};
		\node (AA4) at (-.5,-3) {4};
		\node[scale=0.5,circle,fill=black] (A1) at (0,0) {};
		\node[scale=0.5,circle,fill=black] (A2) at (0,-1) {};
		\node[scale=0.5,circle,fill=black] (A3) at (0,-2) {};
		\node[scale=0.5,circle,fill=black] (A4) at (0,-3) {};
		\node[scale=0.5,circle,fill=black] (B1) at (2,0) {};
		\node[scale=0.5,circle,fill=black] (B2) at (2,-1) {};
		\node[scale=0.5,circle,fill=black] (B3) at (2,-2) {};
		\node[scale=0.5,circle,fill=black] (B4) at (2,-3) {};
		\draw[-Latex] (A1)--(B3);
		\draw[-Latex] (A2)--(B4);
		\draw[-Latex] (A3)--(B3);
		\draw[-Latex] (A4)--(B4);

		\node (CC1) at (-.5+\x,0) {1};
		\node (CC2) at (-.5+\x,-1) {2};
		\node (CC3) at (-.5+\x,-2) {3};
		\node (CC4) at (-.5+\x,-3) {4};
		\node[scale=0.5,circle,fill=black] (C1) at (0+\x,0) {};
		\node[scale=0.5,circle,fill=black] (C2) at (0+\x,-1) {};
		\node[scale=0.5,circle,fill=black] (C3) at (0+\x,-2) {};
		\node[scale=0.5,circle,fill=black] (C4) at (0+\x,-3) {};
		\node[scale=0.5,circle,fill=black] (D1) at (2+\x,0) {};
		\node[scale=0.5,circle,fill=black] (D2) at (2+\x,-1) {};
		\node[scale=0.5,circle,fill=black] (D3) at (2+\x,-2) {};
		\node[scale=0.5,circle,fill=black] (D4) at (2+\x,-3) {};
		\draw[-Latex] (C1)--(D4);
		\draw[-Latex] (C2)--(D3);
		\draw[-Latex] (C3)--(D3);
		\draw[-Latex] (C4)--(D4);
		
		\node (EE1) at (-.5+\y,0) {1};
		\node (EE2) at (-.5+\y,-1) {2};
		\node (EE3) at (-.5+\y,-2) {3};
		\node (EE4) at (-.5+\y,-3) {4};
		\node[scale=0.5,circle,fill=black] (E1) at (0+\y,0) {};
		\node[scale=0.5,circle,fill=black] (E2) at (0+\y,-1) {};
		\node[scale=0.5,circle,fill=black] (E3) at (0+\y,-2) {};
		\node[scale=0.5,circle,fill=black] (E4) at (0+\y,-3) {};
		\node[scale=0.5,circle,fill=black] (F1) at (2+\y,0) {};
		\node[scale=0.5,circle,fill=black] (F2) at (2+\y,-1) {};
		\node[scale=0.5,circle,fill=black] (F3) at (2+\y,-2) {};
		\node[scale=0.5,circle,fill=black] (F4) at (2+\y,-3) {};
		\draw[-Latex] (E1)--(F3);
		\draw[-Latex] (E2)--(F4);
		\draw[-Latex] (E3)--(F1);
		\draw[-Latex] (E4)--(F2);

		\node (GG1) at (-.5+3*\x,0) {1};
		\node (GG2) at (-.5+3*\x,-1) {2};
		\node (GG3) at (-.5+3*\x,-2) {3};
		\node (GG4) at (-.5+3*\x,-3) {4};
		\node[scale=0.5,circle,fill=black] (G1) at (0+3*\x,0) {};
		\node[scale=0.5,circle,fill=black] (G2) at (0+3*\x,-1) {};
		\node[scale=0.5,circle,fill=black] (G3) at (0+3*\x,-2) {};
		\node[scale=0.5,circle,fill=black] (G4) at (0+3*\x,-3) {};
		\node[scale=0.5,circle,fill=black] (H1) at (2+3*\x,0) {};
		\node[scale=0.5,circle,fill=black] (H2) at (2+3*\x,-1) {};
		\node[scale=0.5,circle,fill=black] (H3) at (2+3*\x,-2) {};
		\node[scale=0.5,circle,fill=black] (H4) at (2+3*\x,-3) {};
		\draw[-Latex] (G1)--(H3);
		\draw[-Latex] (G2)--(H4);
		\draw[-Latex] (G3)--(H2);
		\draw[-Latex] (G4)--(H1);
		\end{tikzpicture}
	\end{center}
	\caption{Diagrammatic representations of the four
	functions $f_i$ of Example \ref{ex:7}. The corresponding equivalence classes  
	are not $\bmu$-a.s.\ constant.}
	\label{random_coalesce}
\end{figure}
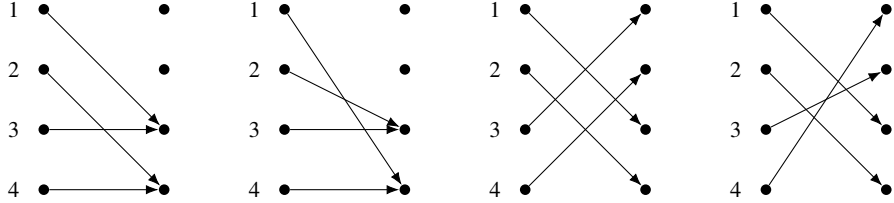

A probability measure $\mu$ on $\FF_S$ may be written in the form
\begin{equation}\label{eq:5}
\mu=\sum_{f\in\FF_S} \alpha_f \delta_f,
\end{equation}
where $\alpha$ is a probability mass function on $\FF_S$ with support $\supp(\mu)$,
and $\delta_f$ is the Dirac delta-mass on the point  $f \in \FF_S$.  
Thus, $\alpha_f>0$ if and only if $f \in\supp(\mu)$.
If $\mu\in\mc{L}(P)$, by \eqref{eq:4} and \eqref{eq:5},
\begin{equation}\label{eq:7}
P=\sum_{f\in \FF_S} \alpha_f M_f,
\end{equation}
where $M_f$ denotes the matrix 
\begin{equation}\label{eq:10}
M_f=(1_{\{f(i)=j\}}: i,j\in S),
\end{equation}
and $1_A$ is the indicator function of $A$.

Let $\Pi_S$ be the set of permutations of $S$. We denote also by $\Pi_S$
the set of matrices $M_f$ as $f$ ranges over the permutations of $S$.

\begin{theorem}\label{thm:4}
Let $\mu$ have the representation \eqref{eq:5}, and $|S|=n$. 
\begin{letlist}

\item We have that
\begin{equation}\label{eq:13}
k(\mu)=\inf\bigl\{\rank(M_{f_t}M_{f_{t-1}}\cdots M_{f_1}): 
f_1,f_2,\dots,f_t \in \supp(\mu),\ t \ge 1\bigr\}.
\end{equation}

\item There exists $T=T(n)$ such that the infimum in \eqref{eq:13}
is achieved for some $t$ satisfying $t\le T$.
\end{letlist}
\end{theorem}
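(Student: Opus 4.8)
The plan is to convert the probabilistic quantity $k(\mu)$ into a deterministic statement about ranks of products of the matrices $M_f$, and then to control the length of the product by a finiteness argument. The starting point is the elementary identity $M_{g\circ h}=M_hM_g$ for $g,h\in\FF_S$, which follows directly from \eqref{eq:10} since $(M_hM_g)_{i,k}=\sum_j 1_{\{h(i)=j\}}1_{\{g(j)=k\}}=1_{\{g(h(i))=k\}}$. Iterating this, every product $M_{f_t}M_{f_{t-1}}\cdots M_{f_1}$ equals $M_g$ for a suitable composition $g$ of the $f_i$; and because the rows of $M_g$ are the basis vectors $\{e_{g(i)}:i\in S\}$, one has $\rank(M_g)=|g(S)|$, which is exactly the number of equivalence classes of $\Eqt{g}{}$. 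Thus each product in \eqref{eq:13} has rank equal to the corresponding coalescence count of the composition it represents, and---since $f_1,\dots,f_t$ range over all of $\supp(\mu)$---the order in which the matrices are multiplied does not affect the infimum. I will write $r^*$ for this common infimum.

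For part (a) I would prove the two inequalities separately. By Lemmas \ref{lem0}(a) and \ref{lem:k_det}(a), the random variable $k(F)=\lim_{t\to\oo}k_t(F)$ equals $k(\mu)$ almost surely, where $k_t(F)=\rank(M_{F_1}\cdots M_{F_t})$. The lower bound $k(\mu)\ge r^*$ is immediate, since for every realization the product $M_{F_1}\cdots M_{F_t}$ is one of the products over which $r^*$ is an infimum. For the upper bound I would use that ranks lie in $\{1,\dots,n\}$, so the infimum $r^*$ is in fact attained by some finite product $M_{g_s}\cdots M_{g_1}$ with $g_1,\dots,g_s\in\supp(\mu)$. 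Each $g_i$ has positive $\mu$-mass, so any prescribed length-$s$ block of the i.i.d.\ sequence $F$ coincides with $(g_s,\dots,g_1)$ with positive probability; by independence across disjoint blocks, almost surely some block $(F_{a+1},\dots,F_{a+s})$ does so. For that $a$, submultiplicativity of rank gives
\[
k_{a+s}(F)=\rank\bigl((M_{F_1}\cdots M_{F_a})(M_{F_{a+1}}\cdots M_{F_{a+s}})\bigr)\le\rank(M_{F_{a+1}}\cdots M_{F_{a+s}})=r^*,
\]
and the monotonicity of $k_t(F)$ from Lemma \ref{lem0}(a) then forces $k(\mu)=\lim_t k_t(F)\le r^*$.

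For part (b) I would argue combinatorially, exploiting that all products live in the finite set $\{M_g:g\in\FF_S\}$ of cardinality $n^n$. Setting $A=\{M_f:f\in\supp(\mu)\}$, let $W_t$ be the set of products of at most $t$ factors from $A$, augmented with the identity matrix $I$ as the empty product, so that $W_0=\{I\}$ and $W_{t+1}=\{I\}\cup A\cdot W_t$. The sets $W_t$ form a non-decreasing chain inside a set of size $n^n$, hence stabilize; and the recursion shows that once $W_{t+1}=W_t$ one has $W_{t+2}=\{I\}\cup A\cdot W_{t+1}=W_{t+1}$, so stabilization is permanent. As $|W_0|=1$ and each strict enlargement adds at least one matrix, the chain is constant from step $T:=n^n$ onward. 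Consequently every matrix expressible as a product of factors from $A$ is already expressible as such a product of length at most $T$; in particular the minimizer achieving $r^*$ is, which proves the claim with $T(n)=n^n$.

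I expect the main obstacle to be the upper bound in part (a): one must ensure that the (finite) minimizing composition is almost surely reproduced verbatim somewhere along the random trajectory, and that the rank, having once dropped to $r^*$, can never rise again. Both points are handled by the submultiplicativity of rank together with the monotonicity of Lemma \ref{lem0}(a), while the apparent discrepancy between forward and backward products is immaterial because \eqref{eq:13} infimizes over all finite sequences.
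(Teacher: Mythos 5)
Your proposal is correct and follows essentially the same route as the paper: identify $\rank(M_{f_t}\cdots M_{f_1})$ with the number of equivalence classes of the corresponding composition, use the a.s.\ constancy of $k(F)$ (Lemma \ref{lem:k_det}(a)) together with the fact that every finite word from $\supp(\mu)$ appears with positive probability to get both inequalities in \eqref{eq:13}, and invoke the finiteness of $\{M_g: g\in\FF_S\}$ for part (b). You merely make explicit two points the paper leaves implicit, namely the block-repetition argument for the upper bound and the concrete stabilization bound $T(n)=n^n$.
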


\begin{proof}
(a) 
Let $F=(F_s: s \in \N)$ be drawn independently from $\mu$. 
Then
$$
R_t:= M_{F_t} M_{F_{t-1}}\cdots M_{F_1}
$$
is the matrix with $(i,j)$th entry $1_{\{\gvec F_t(i)=j\}}$.
Therefore, $k_t(F)$ equals the number of non-zero columns of $R_t$.
Since each row of $R_t$ contains a unique $1$, we have that $k_t(F) = \rank(R_t)$.
Therefore, $k(\mu)$ is the decreasing limit
\begin{equation}\label{eq:15}
k(\mu) = \lim_{t\to\oo} \rank(R_t)\qq\text{a.s.}
\end{equation}
Equation \eqref{eq:13} follows since $k(\mu)$ is integer-valued and deterministic.

(b) Since the rank of a matrix is integer-valued, the infimum in \eqref{eq:13} is attained.
The claim follows since, for given $|S|=n$, there are boundedly
many possible matrices $M_f$.
\qed\end{proof}

Let 
\begin{equation*}
K(P)=\bigl\{k: \text{ there exists $\mu \in \mc{L}(P)$ with }k(\mu)=k\bigr\}.
\end{equation*}
It is a basic question to ask: what can be said about $K$ as a function of $P$?
We first state a well-known result, based on ideas already in work of Doeblin \cite{Doeblin}.

\begin{lemma}\label{lem:8}
We have that $1\in K(P)$ if and only if $P\in\sP_S$ is aperiodic. 
\end{lemma}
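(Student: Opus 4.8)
The plan is to prove the two implications separately, handling periodicity as the easy (necessity) direction and aperiodicity-implies-coalescence as the substantive one. For necessity I would argue by contraposition: suppose $P\in\sP_S$ is periodic with period $d\ge 2$, and let $S=S_0\cup S_1\cup\dots\cup S_{d-1}$ be its cyclic decomposition, so that $p_{i,j}>0$ forces $i\in S_r$, $j\in S_{r+1\bmod d}$ for some $r$. Then for any $\mu\in\mc L(P)$ and any $F=(F_s:s\in\N)$ drawn from $\bmu$, a state $i\in S_a$ satisfies $\gvec F_t(i)\in S_{a+t\bmod d}$ for every $t$, so two states in distinct cyclic classes can never be carried to a common value. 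Hence $k(\mu)\ge d\ge 2$ for every coupling, giving $1\notin K(P)$; this is exactly the observation recorded after \eqref{eq:cotime2} that $T=\oo$ a.s.\ in the periodic case.

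For sufficiency I would exhibit an explicit coupling attaining $k(\mu)=1$ when $P$ is irreducible and aperiodic. Take the \emph{independent coupling} $\mu$, under which $(F(i):i\in S)$ are independent with $\P(F(i)=j)=p_{i,j}$; consistency follows at once from \eqref{eq:4}, so $\mu\in\mc L(P)$. The key structural feature is that, since each $F_s$ is a function, any two trajectories that ever agree remain equal thereafter, while two trajectories currently at distinct states $c\ne d$ evolve by independent $P$-steps. Thus, for a fixed pair $i,j$, the pair process $Y_t=(X_t^i,X_t^j)$ is a Markov chain on $S\times S$ that moves according to $P\otimes P$ off the diagonal $\Delta=\{(v,v):v\in S\}$ and is absorbed, as a set, on $\Delta$. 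The goal is then to show that $Y$ reaches $\Delta$ almost surely.

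The crux — and the point where aperiodicity is indispensable — is the irreducibility of the product chain. Since $P$ is finite, irreducible, and aperiodic, it is primitive, so $P^N$ is strictly positive entrywise for some $N$ (the Doeblin-type input); consequently $(P\otimes P)^N=P^N\otimes P^N$ is strictly positive and $P\otimes P$ is irreducible on $S\times S$. A finite irreducible chain is recurrent and visits every state almost surely, so a genuine $P\otimes P$-chain started off $\Delta$ hits $\Delta$ almost surely; coupling it to $Y$ so that the two agree up to the first visit to $\Delta$ transfers this to $Y$, whence $i$ and $j$ coalesce. As there are only finitely many pairs, almost surely all pairs coalesce, and the maximum of the finitely many pairwise coalescence times is a finite time at which all $n$ trajectories meet; equivalently, by Theorem \ref{thm:4}, some composition $M_{f_t}\cdots M_{f_1}$ has rank $1$. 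Hence $k(\mu)=1$ and $1\in K(P)$. The main obstacle is precisely this absorption step: establishing irreducibility of the product chain (which fails under periodicity) and transferring its recurrence to the absorbed pair process $Y$.
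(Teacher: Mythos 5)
Your proof is correct and follows essentially the same route as the paper: the sufficiency direction uses the independent (Doeblin) coupling $\mu(\{f\})=\prod_{i\in S}p_{i,f(i)}$ and the fact that independently evolving aperiodic irreducible chains meet almost surely, while the necessity direction uses the cyclic-class obstruction to coalescence under periodicity. You simply supply more detail (primitivity of $P$, irreducibility of the product chain, absorption on the diagonal) than the paper, which states these steps as known facts.
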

\begin{proof}
	For $f\in\FF_S$, let $\mu(\{f\})=\prod_{i \in S}p_{i,f(i)}$.  
	This gives rise to $|S|$ chains with transition matrix $P$, starting from $1,2,\dots, n$,
	respectively, that evolve independently until they meet.  
	If $P$ is aperiodic (and irreducible) then all $n$ chains meet a.s.\ in finite time.  	

Conversely, if $P$ is periodic and $p_{i,j}>0$ then $i\ne j$, and $i$ and $j$ can never coalesce, 
implying $1\notin K(P)$. 
\qed\end{proof}

\begin{remark}\label{ex:5}
In a variety of cases of interest including, for example, the Ising 
and random-cluster models (see \cite[Exer.\ 7.3, Sect.\ 8.2]{pgs}), 
the set $S$ has a partial order, denoted $\le$.
For $P\in\sP_S$ satisfying the so-called FKG lattice condition, 
it is natural to seek $\mu\in\mc L(P)$
whose transitions preserve this partial order, and such $\mu$ may be constructed
via the relevant Gibbs sampler (see, for example, \cite[Sect.\ 6.14]{GS}). 
By the irreducibility of $P$, the trajectory starting at the least state of $S$ passes
a.s.\ through the greatest state of $S$. This implies that coalescence occurs, so that $k(\mu)=1$.
\end{remark}

\section{Block measures}\label{sec:3}

We introduce next the concept of a block measure, which is a strong form of the \emph{lumpability} of \cite{KS} and \cite[Exer.\ 6.1.13]{GS}.
	
	\begin{definition}
Let $P\in\sP_S$ and $\mu\in\mc L(P)$. For a partition 
$\sS = \{S_r: r=1,2,\dots,l\}$ of $S$ with $l=l(\sS)\ge 1$, we call $\mu$ 
an \emph{$\sS$-block measure} (or just a \emph{block measure}
with $l$ blocks) if
\begin{letlist}
\item
	for $f \in \supp(\mu)$, 
	there exists a unique permutation $\pi=\pi_f$ of $I:=\{1,2,\dots,l\}$
	such that, for $r\in I$,  $fS_r \subseteq S_{\pi(r)}$, and
\item $k(\mu)=l$.
\end{letlist}
\end{definition}
		
The action of an $\sS$-block measure $\mu$ is as follows. 
Since blocks are mapped a.s.\ to blocks, the measure $\mu$ of \eqref{eq:5}
induces a random permutation $\Pi$ of the blocks
which may be written as
\begin{equation}\label{new:41}
\Pi=\sum_{f\in \supp(\mu)} \alpha_f \delta_{\pi_f}.
\end{equation}
The condition $k(\mu)=l$ implies that 
\begin{equation}\label{new100}
\text{for $r\in I$ and $i,j\in S_r$, the pair $i$, $j$ coalesce a.s.,}
\end{equation}
so that the equivalence classes of $\Eq{\gvec{F}}$ are a.s.\ the blocks
$S_1,S_2,\dots,S_l$.
If, as the chain evolves, we observe only the evolution of the blocks,
we see a  Markov chain on $I$ with transition probabilities $\lambda_{r,s}=\P(\Pi(r)=s)$
which, since $P$ is irreducible, is itself irreducible.

Example \ref{ex:7} illustrates the existence of measures $\mu$
that are not block measures, when $|S|=4$.
On the other hand, we have the following lemma when $|S|=3$. 
For $P\in\sP_S$ and $\mu\in \mc L(P)$, let $\sC=\sC(\mu)$ be the set of possible coalescing pairs,
\begin{equation}\label{eq:pairs} 
\sC =\bigl\{\{i,j\}\subseteq S: i\ne j, \, \bmu(i,j \text{ coalesce})>0\bigr\}.
\end{equation}

\begin{lemma}
	\label{3blocks}
Let $|S|=3$ and $P\in\sP_S$.
If $(P,\mu)$ is consistent then $\mu$ is a block measure.
\end{lemma}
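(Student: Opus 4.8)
The plan is to fix $\mu\in\mc L(P)$ and argue according to the value $k:=k(\mu)\in\{1,2,3\}$, using Theorem~\ref{thm:4} throughout to translate between coalescence numbers and ranks of products $R=M_{f_t}\cdots M_{f_1}$ with $f_1,\dots,f_t\in\supp(\mu)$; recall that such an $R$ has rank $r$ exactly when the composite $\gvec f_t$ has image of size $r$. The extreme cases are immediate. If $k=1$, the one-block partition $\sS=\{S\}$ exhibits $\mu$ as a block measure. If $k=3$, then by \eqref{eq:13} every $M_f$ with $f\in\supp(\mu)$ has rank $3$, so each such $f$ is a permutation, and the partition of $S$ into singletons works. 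Thus the whole content lies in the case $k=2$, where any admissible partition must have the shape (one pair, one singleton), and the real danger — realised when $|S|=4$ in Example~\ref{ex:7} — is that the coalescing pair could be genuinely random, i.e.\ that $\sC$ of \eqref{eq:pairs} contains more than one pair.

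So the key step is to show, for $k=2$, that $\sC$ is a single pair. From \eqref{eq:13} with $k(\mu)=2$ I record two facts: no product has rank $1$, while some product has rank $2$. Associating to each rank-$2$ product $R$ its unique collision pair $c(R)=\{i,j\}$ (the pair it identifies) and its image $\mathrm{im}(R)$, one checks that $\sC$ is precisely the set of collision pairs of rank-$2$ products. Now for any two rank-$2$ products $\phi,\psi$, the composite $\psi\circ\phi$ is again a product and drops to rank $1$ exactly when $\mathrm{im}(\phi)=c(\psi)$; since rank $1$ is forbidden, the set $I$ of images and the set $\sC$ of collision pairs are \emph{disjoint} subsets of the three $2$-subsets of $S$. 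Hence $|I|+|\sC|\le 3$, and if $|\sC|\ge 2$ then $I=\{J\}$ consists of a single pair.

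The step where irreducibility enters — and where I expect the main obstacle to be — is ruling out $|\sC|\ge 2$ from here. Writing $z$ for the element of $S$ not in $J$, every rank-$2$ product has image $J$ and so never takes the value $z$; moreover, for any permutation $g\in\supp(\mu)$, composing $g$ with a rank-$2$ product $\Phi$ gives a rank-$2$ product with image $g(J)$, forcing $g(J)=J$ and hence $g(z)=z$. It follows that no $f\in\supp(\mu)$ sends a state $i\ne z$ to $z$, so $p_{i,z}=0$ for all $i\ne z$, contradicting the irreducibility of $P$. (That irreducibility is essential is confirmed by the reducible measure $\tfrac12\delta_{(121)}+\tfrac12\delta_{(122)}$, which has $k=2$ but two random coalescing pairs.) Therefore $|\sC|=1$, say $\sC=\{\{a,b\}\}$, and we let $c$ be the third state.

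It remains to verify that $\sS=\{\{a,b\},\{c\}\}$ makes $\mu$ a block measure; condition (b) holds since $l=2=k(\mu)$. For condition (a), a permutation $f\in\supp(\mu)$ must fix $\{a,b\}$: appending $f$ to a rank-$2$ product of collision $\{a,b\}$ produces a rank-$2$ product of collision $f^{-1}\{a,b\}$, which must lie in $\sC=\{\{a,b\}\}$, so $f^{-1}\{a,b\}=\{a,b\}$. A non-permutation $f\in\supp(\mu)$ has rank $2$ with collision $c(f)=\{a,b\}$, and the useful trick is that if $\mathrm{im}(f)$ were also $\{a,b\}$ then $f^2$ would be constant (rank $1$), which is impossible; hence $c\in\mathrm{im}(f)$, which forces $f(\{a,b\})$ and $f(\{c\})$ into distinct blocks, so $f$ respects $\sS$. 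This completes the verification, and the crux of the argument remains the $k=2$ uniqueness of the coalescing pair, which fails for $|S|=4$ and is salvaged here only by combining the rank/composition bookkeeping with irreducibility.
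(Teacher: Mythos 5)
Your proof is correct, and it takes a genuinely different route from the paper's. The paper cases on $|\sC|$ and delegates essentially all of the work to Proposition \ref{prp:calP}(a,\,b), whose proof (valid for general $n$) is probabilistic: it introduces the hitting time $M=\inf\{t:\gvec F_t(1)=\gvec F_t(2)=1\}$ and derives a contradiction from the decomposition $\bmu(J)=\bmu(B\cap J)+\bmu(\ol B\cap J)$. You instead case on $k(\mu)$ and, in the only nontrivial case $k(\mu)=2$, run a purely combinatorial argument on $\supp(\mu)$ via the rank characterization \eqref{eq:13}: the engine is the observation that the images and the collision pairs of rank-$2$ products form disjoint families of $2$-subsets of a $3$-set, so $|\sC|\ge 2$ forces a common image $J$, and irreducibility then enters through the clean conclusion $p_{i,z}=0$ for $i\ne z$ rather than through a hitting-time argument. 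Your version buys two things: it is self-contained and tailored to $n=3$ (the paper's Proposition is more general but heavier), and it explicitly verifies condition (a) of the block-measure definition in the two-block case --- the paper only establishes the a.s.\ coalescence structure and leaves implicit the check that every $f\in\supp(\mu)$ maps blocks into blocks, which your $f^{-1}\{a,b\}\in\sC$ observation for permutations and your $f^2$-is-constant trick for non-permutations supply directly. The reducible example $\tfrac12\delta_{(121)}+\tfrac12\delta_{(122)}$ is a worthwhile addition, since it shows the irreducibility hypothesis cannot be dropped; what the paper's route buys in exchange is Proposition \ref{prp:calP} itself, which is reused elsewhere (Examples \ref{ex:10}, \ref{ex:11} and Theorem \ref{thm:6}).
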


\begin{proof}
Let $S$, $(P,\mu)$ be as given.
If $\sC $ is empty then $k(\mu)=3$ and $\mu$ is a block measure with 3 blocks.

If $|\sC| \ge 2$, we have by the forthcoming Proposition 
\ref{prp:calP}(a,\,b) that
$k(\mu)\le 1$, so that $\mu$ is a block measure with $1$ block.

Finally, if $\sC$ contains exactly one element then we may assume, without loss of generality, 
that element is $\{1,2\}$.  By Proposition
\ref{prp:calP}(b),
we have $k(\mu)=1$, whence 
a.s.\ some pair coalesces.  By assumption only $\{1,2\}$ can coalesce, 
so in fact a.s.\ we have that $1$ and $2$  coalesce,
 and they do not coalesce with $3$.  
 Therefore, $\mu$ is a block measure with the two blocks $\{1,2\}$ and $\{3\}$.
\qed\end{proof}

We show next that, for $1\le k \le |S|$, there exists a consistent pair $(P,\mu)$ such that
$\mu$ is a block measure with $k(\mu)=k$.

\begin{lemma}\label{lem:5}
For $|S|=n\ge 2$ and $1\le k \le n$, there exists an aperiodic $P\in\sP_S$ such that $k \in K(P)$.
\end{lemma}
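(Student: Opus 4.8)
The plan is to realise every value $k\in\{1,\dots,n\}$ by an explicit block measure with $k$ blocks, from which $P$ is then read off. First I would fix an arbitrary partition $\sS=\{S_1,\dots,S_k\}$ of $S$ into $k$ non-empty blocks; crucially the blocks need not have equal size, which matters because $k$ need not divide $n$. In each block $S_r$ I choose a distinguished \emph{entry point} $e_r\in S_r$. I take $\supp(\mu)$ to consist of two kinds of functions, each respecting $\sS$ in the sense of condition (a) of the block-measure definition: (i) a family of functions $f$ with $\pi_f=\mathrm{id}$ whose restrictions generate an irreducible chain inside every block, together with the identity map $\mathrm{id}_S$ itself; and (ii) a single \lq collapse-and-shift' function $g$ sending all of $S_r$ to the single state $e_{r+1}$, so that $\pi_g$ is the cyclic shift $\sigma\colon r\mapsto r+1\pmod k$. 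Any choice of positive weights $\alpha_f$ then defines $\mu$ via \eqref{eq:5} and, via \eqref{eq:7}, a matrix $P$ with $\mu\in\mc L(P)$.

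The heart of the argument is to show $k(\mu)=k$, which splits into \emph{no coalescence between blocks} and \emph{almost-sure coalescence within blocks}. For the first, since every function in $\supp(\mu)$ permutes the blocks, a one-line induction gives that the trajectory from $i\in S_r$ lies at time $t$ in block $S_{\gvec{\Pi}_t(r)}$, where $\gvec{\Pi}_t=\pi_{F_t}\circ\cdots\circ\pi_{F_1}$ is itself a permutation of $I=\{1,\dots,k\}$; hence trajectories starting in distinct blocks occupy distinct blocks at all times and can never be equal. For the second, the move $g$ is applied with positive probability at each i.i.d.\ step, so almost surely it is applied at some finite time, at which instant every block is collapsed to a single state and its trajectories merge forever after. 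Thus the classes of $\Eq{\gvec F}$ are almost surely exactly $S_1,\dots,S_k$, giving $k(\mu)=k$ (recall $k(\mu)$ is a.s.\ constant by Lemma \ref{lem:k_det}(a)) and verifying condition (b).

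It remains to check $P\in\sP_S$ and aperiodicity. Aperiodicity is immediate: since $\mathrm{id}_S\in\supp(\mu)$, we have $p_{i,i}\ge\alpha_{\mathrm{id}_S}>0$ for every $i$. For irreducibility, from any $i\in S_r$ I would reach any $j\in S_s$ by applying $g$ the appropriate number of times (moving the state cyclically to the entry point $e_s$) and then using the within-block irreducible dynamics of type (i) to travel from $e_s$ to $j$; every such finite path has positive probability. The degenerate values $k=1$ (one block, where $g$ merely collapses $S$ and type (i) is an irreducible aperiodic coupling, recovering Lemma \ref{lem:8}) and $k=n$ (all blocks singletons, where $g$ is a genuine cyclic permutation and no coalescence occurs) are special cases of the same construction.

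The step I expect to be the main obstacle is reconciling irreducibility of $P$ with the exact count $k$. Irreducibility forces the blocks to be truly linked, which at first sight threatens to create coalescence across blocks and thereby lower $k(\mu)$; the resolution is that linking the blocks by a \emph{permutation} of blocks, rather than by an arbitrary merging, keeps distinct blocks distinct for all time, so it is precisely the permutation structure that protects the count from below while the collapse in $g$ secures it from above. A secondary pitfall is that a pure collapse-and-shift dynamics would visit only the entry points $e_r$ and hence fail to be irreducible once a block has size $\ge 2$; this is exactly why the within-block spreading functions of type (i) are needed. Unequal block sizes cause no trouble, since condition (a) asks only that each block map into a single block, not bijectively.
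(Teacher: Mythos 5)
Your proposal is correct and follows essentially the same route as the paper: both arguments realise $k$ as the coalescence number of an $\sS$-block measure built on an arbitrary partition of $S$ into $k$ non-empty blocks, and then read off $P$ from that measure. The only difference is cosmetic --- the paper takes $\supp(\mu)$ to be the set of \emph{all} block-respecting functions (which makes every entry $p_{i,j}$ strictly positive, so irreducibility and aperiodicity are immediate), whereas you use a minimal explicit support and therefore must argue irreducibility via a path through the entry points; both verifications go through.
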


\begin{proof}
Let $\sS=\{S_r: r=1,2,\dots,l\}$ be a partition of $S$, and let $\sG\subseteq\FF_S$
be the set of all functions $g$ satisfying: 
there exists a permutation 
$\pi$ of $\{1,2,\dots,l\}$ such that,
for $r=1,2,\dots,l$, we have  $gS_r \subseteq S_{\pi(r)}$. 
Any probability measure $\mu$ on $\FF_S$
with support $\sG$ is an $\sS$-block measure.

Let $\mu$ be such a measure and let $P$ be the associated stochastic matrix on $S$,
given in \eqref{eq:4}.
For $i,j\in S$, there exists $g \in \sG$ such that $g(i)=j$. Therefore, $P$ is irreducible
and aperiodic.
\qed\end{proof}

We identify next the consistent pairs $(P,\mu)$ for which either $k(\mu)=|S|$ or $|S|\in K(P)$.

\begin{theorem}\label{lem:3}
Let $|S|=n \ge 2$ and $P\in \sP_S$. We have that
\begin{letlist}
\item 
$k(\mu)=n$ if and only if $\supp(\mu)$ contains only permutations of $S$,
\item $n \in K(P)$ if and only if $P$ is doubly stochastic.
\end{letlist}
\end{theorem}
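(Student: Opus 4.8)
The plan is to deduce both parts from the rank formula of Theorem \ref{thm:4}(a), invoking Birkhoff's theorem only for part (b).

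For part (a), I would treat the two implications separately. For the reverse implication, suppose every $f\in\supp(\mu)$ is a permutation of $S$; then each matrix $M_f$ is a permutation matrix and hence invertible, so every product $M_{f_t}M_{f_{t-1}}\cdots M_{f_1}$ appearing in \eqref{eq:13} is itself a permutation matrix of rank $n$. The infimum in \eqref{eq:13} is therefore equal to $n$, giving $k(\mu)=n$. For the forward implication I would argue contrapositively: if some $f\in\supp(\mu)$ is not a permutation then, since $S$ is finite, $f$ fails to be surjective, so at least one column of $M_f$ vanishes and hence $\rank(M_f)\le n-1$. Taking $t=1$ and $f_1=f$ in \eqref{eq:13} then yields $k(\mu)\le n-1<n$. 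This proves part (a).

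For part (b), I would combine part (a) with the representation \eqref{eq:7}. By definition $n\in K(P)$ if and only if there exists $\mu\in\mc L(P)$ with $k(\mu)=n$, which by part (a) holds if and only if there exists $\mu\in\mc L(P)$ whose support consists solely of permutations of $S$. By \eqref{eq:7}, such a $\mu$ exists precisely when $P$ admits a representation as a convex combination $P=\sum_k\alpha_k M_{\sigma_k}$ of permutation matrices with $\alpha_k>0$. Birkhoff's theorem asserts that this last condition is equivalent to $P$ being doubly stochastic. For the reverse direction one must check that, given such a Birkhoff decomposition, the measure $\mu=\sum_k\alpha_k\delta_{\sigma_k}$ indeed lies in $\mc L(P)$; this is immediate from \eqref{eq:4}, since $p_{i,j}=\sum_{k:\,\sigma_k(i)=j}\alpha_k$.

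The argument is essentially a reduction to facts already in hand: the only substantive external input is Birkhoff's theorem, and everything else follows directly from the rank characterisation in Theorem \ref{thm:4}(a). I therefore do not anticipate a serious obstacle. The one point warranting slight care is the forward direction of part (a), where I must justify that a non-bijective self-map of a finite set produces a rank-deficient matrix $M_f$ (equivalently, a zero column), so that the single-factor product at $t=1$ already forces $k(\mu)<n$.
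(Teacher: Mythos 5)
Your proof is correct and follows essentially the same route as the paper: part (b) is identical (Birkhoff's theorem combined with part (a) and the representation \eqref{eq:7}), and your part (a), though phrased via the rank formula \eqref{eq:13}, is equivalent to the paper's direct observation that permutations keep $k_t(F)=n$ while a non-permutation in $\supp(\mu)$ forces $k_1(F)<n$ with positive probability, since Theorem \ref{thm:4}(a) identifies $k_t(F)$ with $\rank(R_t)$.
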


Before proving this, we remind the reader of Birkhoff's theorem \cite{Birkh}
(sometimes attributed also
to von Neumann \cite{vN}).

\begin{theorem}[\cite{Birkh,vN}]\label{birk}
A stochastic matrix $P$ on the finite state space $S$ is doubly stochastic
if and only if it lies in the convex hull of the set $\Pi_S$ of permutation matrices.
\end{theorem}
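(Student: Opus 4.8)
The plan is to prove the two implications separately. The \emph{easy} direction is that any $P$ lying in the convex hull of $\Pi_S$ is doubly stochastic: every permutation matrix has all row and column sums equal to $1$, and the family of matrices with this property is preserved under convex combinations, so a convex combination of permutation matrices is again doubly stochastic. I would dispatch this in a single line. For the \emph{hard} direction — that every doubly stochastic $P$ lies in the convex hull of $\Pi_S$ — I would argue by induction on the number $N(P)$ of strictly positive entries of $P$. Since each of the $n$ rows of a doubly stochastic matrix sums to $1$, every row has at least one positive entry, so $N(P)\ge n$; and if $N(P)=n$, then exactly one entry in each row is positive and equals $1$, the column sums force these to lie in distinct columns, and $P$ is a permutation matrix. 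This is the base case.

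The inductive engine is the combinatorial claim that \emph{every doubly stochastic $P$ admits a permutation $\sigma$ of $S$ with $p_{i,\sigma(i)}>0$ for all $i$}. I would prove this via Hall's marriage theorem applied to the bipartite graph on rows and columns that joins row $i$ to column $j$ whenever $p_{i,j}>0$. If Hall's condition failed, there would be a set $A$ of rows whose column-neighbourhood $B$ satisfies $|B|<|A|$; but the total mass carried by the rows in $A$ equals $|A|$ (each such row summing to $1$), while this mass is supported on the columns of $B$ and so is at most $\sum_{j\in B}\sum_i p_{i,j}=|B|<|A|$, a contradiction, where the penultimate equality uses that each column sum is $1$. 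Hence a perfect matching, and thus the desired $\sigma$, exists.

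Given such a $\sigma$, set $\alpha=\min_i p_{i,\sigma(i)}$, so $0<\alpha\le 1$. If $\alpha=1$ then $P=M_\sigma\in\Pi_S$. Otherwise $0<\alpha<1$, and I would form
\[
P'=\frac{1}{1-\alpha}\bigl(P-\alpha M_\sigma\bigr).
\]
This $P'$ is nonnegative because $\alpha$ is the minimum of the entries $p_{i,\sigma(i)}$ and these are the only entries affected; it remains doubly stochastic because subtracting $\alpha M_\sigma$ lowers exactly one entry in each row and each column by $\alpha$, giving all row and column sums $1-\alpha$, which the normalisation restores to $1$. Crucially $N(P')<N(P)$, since at least one entry $p_{i,\sigma(i)}$ attaining the minimum $\alpha$ is driven to zero while no new positive entries are created. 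By the induction hypothesis $P'$ is a convex combination of permutation matrices, whence $P=\alpha M_\sigma+(1-\alpha)P'$ is as well.

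The main obstacle is the combinatorial lemma asserting the existence of a permutation lying in the support of $P$; once it is in hand the reduction is routine bookkeeping. The cleanest route is the Hall-condition counting argument above, which uses double stochasticity essentially — the row-mass $|A|$ cannot be squeezed into fewer than $|A|$ columns of total mass $|B|$. One could alternatively invoke König's theorem on bipartite graphs, but appealing to Hall's theorem keeps the argument self-contained.
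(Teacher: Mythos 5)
Your proof is correct, but a comparison with ``the paper's proof'' is moot here: the paper does not prove Theorem \ref{birk} at all --- it quotes the result as a classical theorem of Birkhoff and von Neumann and delegates the proof to the citations. The argument you supply is the standard textbook proof of the Birkhoff--von Neumann theorem, and every step checks out. The easy direction is exactly the one-liner you describe, since row and column sums are linear in the matrix entries. For the hard direction, your verification of Hall's condition is the essential use of double stochasticity: the mass $|A|$ carried by a set $A$ of rows is supported on its column-neighbourhood $B$, whose total column mass is $|B|$, forcing $|B|\ge|A|$; the resulting perfect matching gives the permutation $\sigma$ with $p_{i,\sigma(i)}>0$ for all $i$. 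The peeling step is also sound: with $\alpha=\min_i p_{i,\sigma(i)}$, the case $\alpha=1$ correctly yields $P=M_\sigma$ (row sums force all off-matching entries to vanish), and otherwise $P'=(P-\alpha M_\sigma)/(1-\alpha)$ is doubly stochastic with strictly fewer positive entries, since only the entries $p_{i,\sigma(i)}$ are lowered and at least one minimising entry is driven to zero. Two small points of hygiene, neither a gap: the induction should formally be strong induction on the number $N(P)$ of positive entries (you invoke the hypothesis for $N(P')<N(P)$, not specifically $N(P)-1$, and your write-up already does this implicitly); and your base case $N(P)=n$ is correctly justified, with the column sums ruling out two unit entries sharing a column. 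Since the algorithm reduces $N(P)$ by at least one per step and $N(P)\le n^2$, your argument also gives the standard quantitative bound that $P$ is a convex combination of at most $n^2-n+1$ elements of $\Pi_S$, which the bare statement in the paper does not record.
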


\begin{remark}\label{rem:3}
We note that the simulation problem confronted by CFTP is trivial when 
$P$ is irreducible and doubly stochastic, since such $P$ are characterized as those
transition matrices with the uniform invariant distribution 
$\pi=(\pi_i=n^{-1}: i \in S)$. 
\end{remark}

\begin{proof}[Proof of Theorem \ref{lem:3}]
(a)
If $\supp(\mu)$ contains only permutations, then a.s.\  $k_t(F)=n$
for every $t\in\N$. Hence $n\in K(P)$.
If $\supp(\mu)$ contains a non-permutation,
then with positive probability $k_1(F)<n$ and hence $k(\mu)<n$.

(b)
By Theorem \ref{birk}, $P$ is doubly stochastic if and only if
it may be expressed as a  convex combination
\begin{equation}\label{eq:11}
P= \sum_{f\in \Pi_S}\alpha_f M_f,
\end{equation}
of permutation matrices $M_f$ (recall \eqref{eq:7} and \eqref{eq:10}).

If $P$ is doubly stochastic, let the $\alpha_f$ satisfy \eqref{eq:11}, and
let 
\begin{equation}\label{eq:14}
\mu=\sum_{f\in\Pi_S} \alpha_f\delta_f,
\end{equation}
as in \eqref{eq:5}.
Then 
$\mu\in\mc L(P)$, and $k(\mu)=n$ by part (a).

If $P$ is not doubly stochastic and $\mu\in\mc L(P)$, then $\mu$
has no representation of the form \eqref{eq:14}, so that $k(\mu)<n$ by
part (a). 
\qed\end{proof}

Finally in this section, we present a necessary
condition for $\mu$ to be an $\sS$-block measure; see Theorem \ref{thm:7} below.

Let $P\in\sP_S$, and let $\sS = \{S_r: r=1,2,\dots,l\}$ be a partition of $S$ with $l \ge 1$. 
For $r,s\in I:= \{1,2,\dots,l\}$ and $i \in S_r$, let
$$
\lambda_{r,s}^{(i)}= \sum_{j \in S_s} p_{i,j}.
$$
Since a block measure comprises  a transition operator on blocks, combined with a shuffling of states 
within blocks, it is necessary in order that $\mu$ be an $\sS$-block measure that
\begin{equation}\label{new:40}
\text{$\lambda_{r,s}^{(i)}$ is constant for $i\in S_r$.}
\end{equation}
When \eqref{new:40} holds, we write
\begin{equation}\label{new40a}
\lambda_{r,s} = \lambda_{r,s}^{(i)}, \qq i \in S_r.
\end{equation}
Under \eqref{new:40}, the matrix $\La=(\lambda_{r,s}: r,s\in I)$
is the irreducible transition matrix of the Markov chain derived from $P$
by observing the evolution of blocks, which is to say that
\begin{equation}\label{new40b}
\lambda_{r,s}= \mu(\Pi(r)=s), \qq r,s \in I,
\end{equation}
where $\Pi$ is given by \eqref{new:41}.
By Theorem \ref{lem:3}, we have that $l \in K(\La)$, and hence $\La$ is doubly stochastic, 
which is to say that
\begin{equation}\label{new:42}
\sum_{r\in I} \lambda_{r,s} = \sum_{r\in I}\sum_{j \in S_s} p_{i_r,j}=1, \qq s\in I,
\end{equation} 
where each $i_r$ is an arbitrarily chosen representative of the block $S_r$. By \eqref{new:40}, equation
\eqref{new:42} may be written in the form
\begin{equation}\label{new:43}
\sum_{i\in S}\sum_{j \in S_s} \frac1{|S_{r(i)}|}p_{i,j}=1, \qq r,s\in I,
\end{equation} 
where $r(i)$ is the index $r$ such that $i \in S_r$.  
We summarise the above discussion
in the following theorem.

\begin{theorem}\label{thm:7}
Let $S$ be a non-empty, finite set, let $P\in\sP_S$, and let $\sS = \{S_r: r=1,2,\dots,l\}$ be  a partition of $S$. 
If $\mu\in \mc L(P)$ is an $\sS$-block measure, then 
\eqref{new:40} and \eqref{new:43} hold, and also $k(\mu)=l$. 
\end{theorem}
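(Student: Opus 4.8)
The plan is to treat the two implications separately, noting that the forward implication is essentially already contained in the discussion preceding the statement. Indeed, if $\mu$ is an $\sS$-block measure then $k(\mu)=l$ holds by definition, \eqref{new:40} holds because $fS_r\subseteq S_{\pi_f(r)}$ forces $\lambda_{r,s}^{(i)}=\mu(\pi_f(r)=s)$ to be independent of $i\in S_r$, and \eqref{new:43} follows since the induced block coupling $\Pi=\sum_f\alpha_f\delta_{\pi_f}$ is a consistent coupling of $\La$ supported on permutations, whence $k(\Pi)=l$ by Theorem \ref{lem:3}(a) and so $\La$ is doubly stochastic by Theorem \ref{lem:3}(b). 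The substance of the proof is therefore the reverse implication: assuming \eqref{new:40}, \eqref{new:43} and $k(\mu)=l$, I must show that each $f\in\supp(\mu)$ maps every block into a single block via a permutation of $I$ (condition (a) of the definition; condition (b) is exactly $k(\mu)=l$).

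For the reverse implication I would reduce everything to a single statement:
\begin{equation}\label{eq:plan-i}
\text{no two states lying in distinct blocks coalesce with positive probability.}
\end{equation}
The point is that \eqref{eq:plan-i} forces each coalescence class of $\Eq{\gvec F}$ to lie in a single block; since there are a.s.\ exactly $k(\mu)=l$ classes and exactly $l$ nonempty blocks, a pigeonhole argument then shows that a.s.\ the classes are \emph{precisely} the blocks, so in particular all states of a common block coalesce a.s. Granting this, each $f$ respects the blocks by a clean conditioning argument: if some $f_0\in\supp(\mu)$ sent two states $i,i'\in S_r$ into different blocks, then on the positive-probability event $\{F_1=f_0\}$ the pair would be separated into distinct blocks and hence, by \eqref{eq:plan-i}, could never coalesce, contradicting that same-block states coalesce a.s.; and if $f_0$ sent $S_r$ and $S_{r'}$ (with $r\ne r'$) into a common block, then on $\{F_1=f_0\}$ a cross-block pair would be placed in a common block and would coalesce a.s., contradicting \eqref{eq:plan-i}. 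The first alternative yields that each $f$ maps blocks into blocks, and the second that the induced block map $\pi_f$ is injective, hence a permutation.

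The main obstacle is proving \eqref{eq:plan-i} from \eqref{new:40}, \eqref{new:43}, and $k(\mu)=l$. My approach would exploit the doubly stochastic structure via the invariant mass: summing the stationary vector $\pi$ over blocks gives a vector that is $\La$-invariant by \eqref{new:40}, and since $\La$ is irreducible and doubly stochastic by \eqref{new:43}, this vector is uniform, so each block carries invariant mass $1/l$. Next I would pass to the \emph{terminal image}: by Theorem \ref{thm:4} there is a composition $g$ of support functions with $|g(S)|=l$, and every $h\in\supp(\mu)$ acts injectively on such an $l$-set, so the $l$ values of $g$ evolve as $l$ never-colliding trajectories (``tokens'') of $P$, each with uniform stationary block-law $1/l$. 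Writing $c_s$ for the number of tokens currently in block $S_s$, one has $\sum_s c_s=l$ together with the identity $\sum_s c_s^2=l+\#\{\text{ordered pairs of distinct tokens sharing a block}\}$, so that \eqref{eq:plan-i} is equivalent to the tokens occupying pairwise distinct blocks at all large times. The first moment gives exactly one token per block on average, and the hard part will be upgrading this to the pathwise statement that two non-colliding tokens can never persistently share a block: here I expect to combine irreducibility of $\La$ with the fact that $k(\mu)=l<n$ (when $l<n$) forces genuine collapsing transitions to exist, so that two tokens sharing a block could be driven to collide, contradicting that the image size is stuck at its minimal value $l$. The case $l=n$ is immediate, since then $k(\mu)=n$ forces $\supp(\mu)$ to consist only of permutations by Theorem \ref{lem:3}(a), and \eqref{eq:plan-i} holds trivially.
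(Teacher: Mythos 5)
Your treatment of the necessity direction is fine and matches the paper's: the induced block map $\Pi$ is supported on permutations of $I$, so $l\in K(\La)$ and Theorem \ref{lem:3} forces $\La$ to be doubly stochastic. The problem is the converse. You read it as the assertion that \emph{every} $\mu\in\mc L(P)$ with $k(\mu)=l$ is an $\sS$-block measure once $P$ satisfies \eqref{new:40} and \eqref{new:43}, and you reduce this to the claim that no two states lying in distinct blocks of $\sS$ coalesce with positive probability. That claim is false under these hypotheses, so the ``hard part'' you defer is not merely hard but impossible. Concretely, take $P=P_4$ (all entries $\tfrac14$) and $\sS=\{\{1,2\},\{3,4\}\}$; then \eqref{new:40} and \eqref{new:43} hold trivially. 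By the construction in Theorem \ref{thm:6} (after relabelling states, which is harmless since $P_4$ is permutation-invariant) there is a block measure $\mu\in\mc L(P_4)$ with $k(\mu)=2=l$ whose coalescence classes are $\{1,3\}$ and $\{2,4\}$. This $\mu$ satisfies all of your hypotheses, yet states $1$ and $3$ lie in distinct blocks of $\sS$ and coalesce almost surely, and $\mu$ is not an $\sS$-block measure. In your token language, the two surviving trajectories occupy the same $\sS$-block with positive probability without ever colliding, so the second-moment quantity cannot be forced down to $l$; the first-moment computation via the uniform block-invariant measure is correct but cannot be upgraded.

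The paper's converse is of a different nature: it is an existence (construction) statement, not a verification of an arbitrary given $\mu$. From \eqref{new:40} and \eqref{new:43} one forms the doubly stochastic matrix $\La$, uses Birkhoff's theorem (through Theorem \ref{lem:3}) to obtain a permutation-supported $\rho\in\mc L(\La)$, samples a block permutation $\Pi$ from $\rho$, and then, conditionally on $\Pi$, moves the states independently into their target blocks with probabilities $p_{i,j}/\lambda_{r,s}$; the law of the resulting random map is an $\sS$-block measure consistent with $P$. So the reading the proof supports is that the stated conditions characterise those pairs $(P,\sS)$ for which \emph{some} consistent $\sS$-block measure exists. Your argument needs to be recast as such a construction; as written, its central reduction targets a false statement, and even granting your reading the key step is only sketched, not proved.
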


\section{The set $K(P)$}\label{sec:4}

We begin with a triplet of conditions.

\begin{proposition}
	\label{prp:calP}
Let $S=\{1,2,\dots,n\}$ where $n \ge 3$, and
let $P\in\sP_S$ and $\mu\in\mc L(P)$. Let
$\sC=\sC(\mu)$ be the set of possible coalescing pairs, as in \eqref{eq:pairs}.
\begin{letlist}
\item $k(\mu)=n$ if and only if  $|\sC|=0$.
\item $k(\mu)=n-1$ if and only if $|\sC|=1$. 
\item
If $|\sC|$ comprises the single pair $\{1,2\}$, then $P$ satisfies
\begin{equation}\label{eq:new99}
\sum_{j=3}^n p_{1,j} = \sum_{j=3}^n p_{2,j} = \sum_{i=3}^n(p_{i,1}+p_{i,2}).
\end{equation}
\end{letlist}
\end{proposition}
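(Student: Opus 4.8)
The plan is to work throughout with the characterisation from Theorem~\ref{thm:4}: writing a \emph{word} for any composition $w=f_t\circ\cdots\circ f_1$ with $f_s\in\supp(\mu)$ and $t\ge1$, we have $k(\mu)=\min_w|w(S)|=\min_w\rank(M_w)$, and $\{i,j\}\in\sC$ if and only if some word $w$ satisfies $w(i)=w(j)$ (the last function applied at a coalescence of $i$ and $j$ realises such a word). Part~(a) is then immediate: if $|\sC|=0$ no word identifies any pair, so every word is injective and $k(\mu)=n$; conversely a word witnessing a pair in $\sC$ has image of size at most $n-1$, giving $k(\mu)\le n-1$. For the easy direction of (b), suppose $\sC=\{\{1,2\}\}$. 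On the positive-probability event that $1$ and $2$ coalesce, no other pair can coalesce, so the limiting partition is $\{1,2\},\{3\},\dots,\{n\}$ and $k(F)=n-1$ there; since $k(F)$ equals the a.s.-constant $k(\mu)$, we conclude $k(\mu)=n-1$.

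The hard direction of (b), namely $k(\mu)=n-1\Rightarrow|\sC|=1$, is the main obstacle. A naive composition argument fails: merging two pairs one after another does not preserve the first identification, and indeed without irreducibility the implication is false (one can build a reducible $P$ with $k(\mu)=n-1$ but two mergeable pairs). My plan is to argue via \emph{minimal images}. Since $k(\mu)=n-1$, every word $w$ has $|w(S)|\ge n-1$ and hence identifies at most one pair; a word witnessing a pair of $\sC$ has image exactly $S\sm\{x\}$ for some $x$, and I call such an $x$ \emph{omittable}, writing $X$ for the (non-empty) set of omittable points. First I would show that every word $v$ acts injectively on every minimal image $R=S\sm\{x\}$, since otherwise $vw$ (with $w(S)=R$) would have image smaller than $n-1$; this immediately gives that any pair in $\sC$ meets $\{x\}$ for every $x\in X$, i.e.\ every pair of $\sC$ contains all of $X$. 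Next I would use irreducibility to show $|X|\ge2$: if $X=\{x\}$ then the minimal images are closed under the semigroup action and all equal $S\sm\{x\}$, which forces each support function either to fix $x$ or to omit $x$ from its image, whence $p_{y,x}=0$ for all $y\ne x$ and $x$ is unreachable, a contradiction. Since every pair of $\sC$ contains the $\ge2$ points of $X$, necessarily $|X|=2$ and $\sC\subseteq\{X\}$; as $\sC\ne\es$ by (a), we get $\sC=\{X\}$ and $|\sC|=1$.

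For (c), take $\sC=\{\{1,2\}\}$, so $k(\mu)=n-1$ by (b). Because a single-step identification is itself a coalescence, each $f\in\supp(\mu)$ is injective apart from possibly $f(1)=f(2)$, giving two types: permutations, and collapsing maps $g$ with $g(1)=g(2)$. I would then pin down both types using $\sC=\{\{1,2\}\}$ and $k(\mu)=n-1$: any permutation $f$ in the support must satisfy $\{f(1),f(2)\}=\{1,2\}$ (else composing a collapsing $g$ after $f$ identifies the pair $\{f^{-1}(1),f^{-1}(2)\}\ne\{1,2\}$), and any collapsing $g$ must omit from its image a point of $\{1,2\}$ (else $g\circ g$ has image of size $n-2$, contradicting $k(\mu)=n-1$). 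With these constraints each support function permutes the blocks of $\sS=\{\{1,2\},\{3\},\dots,\{n\}\}$, so $\mu$ is an $\sS$-block measure; its induced block chain $\La$ is then doubly stochastic and \eqref{new:40} holds.

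It remains only to read off \eqref{eq:new99}. Applying \eqref{new:40} to the block $\{1,2\}$ against each singleton block gives $p_{1,j}=p_{2,j}$ for $j\ge3$, hence $\sum_{j\ge3}p_{1,j}=\sum_{j\ge3}p_{2,j}$, while the unit column-sum of $\La$ at the block $\{1,2\}$ gives $\sum_{i\ge3}(p_{i,1}+p_{i,2})=1-(p_{1,1}+p_{1,2})=\sum_{j\ge3}p_{1,j}$ (using that row $1$ of $P$ sums to $1$); together these are exactly \eqref{eq:new99}. As an alternative to invoking block-measure theory, one can obtain \eqref{eq:new99} by a direct count of which type-P and type-C functions contribute to each of $\sum_{j\ge3}p_{1,j}$, $\sum_{j\ge3}p_{2,j}$ and $\sum_{i\ge3}(p_{i,1}+p_{i,2})$, using that a collapsing $g$ with value $w=g(1)=g(2)$ satisfies $w\in\{1,2\}$ exactly when $w$ is the single point of $\{1,2\}$ lying in its image.
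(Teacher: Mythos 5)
Your proof is correct, but it reaches the conclusion by a genuinely different route from the paper's in both (b) and (c). For the hard direction of (b) the paper argues probabilistically: it introduces the stopping time $M=\inf\{t>0:\gvec F_t(1)=\gvec F_t(2)=1\}$, uses irreducibility to give $J=\{M<\oo\}$ positive probability, and then splits into two cases ($\{1,2\},\{1,3\}\in\sC$ versus $\{1,2\},\{3,4\}\in\sC$) and into sub-events $B$, $\ol B$ to force $\bmu(J)=0$, a contradiction. Your deterministic semigroup argument---every word acts injectively on every minimal image, hence every pair of $\sC$ contains every omittable point, and irreducibility forces at least two omittable points---replaces both the stopping time and the case split, and makes the role of irreducibility more transparent; it rests only on the (correct, since $\FF_S$ is finite) equivalence between membership of $\sC$ and the existence of an identifying word. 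For (c) the paper works directly with the first-step counts $M=|\{i\le 2: F_1(i)\in\{1,2\}\}|$ and $N=|\{i\ge 3: F_1(i)\in\{1,2\}\}|$, shows four events are null, and equates $\mu(N=1)$ with $\mu(\ol A_r)$ and with the mean of $N$; you instead classify the support functions (permutations preserving $\{1,2\}$; collapsing maps omitting a point of $\{1,2\}$ from their image), deduce that $\mu$ is a block measure for $\{\{1,2\},\{3\},\dots,\{n\}\}$, and read \eqref{eq:new99} off \eqref{new:40} together with the double stochasticity of the block chain. This is legitimate and non-circular (Theorem \ref{thm:7} and the discussion around it do not rely on Proposition \ref{prp:calP}), but the assertion that \lq each support function permutes the blocks' deserves one extra line: a collapsing $g$ may send $\{1,2\}$ to a point $w\ge 3$, and one must check, using that $g$ is injective off $\{1,2\}$ and has image $S\setminus\{u\}$ with $u\in\{1,2\}$, that the induced map on blocks is nevertheless a bijection (it is, in both the case $w\in\{1,2\}$ and the case $w\ge 3$); your closing \lq direct count' alternative supplies \eqref{eq:new99} without needing this. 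Your treatment of (a) and of the easy direction of (b) matches the paper's.
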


\begin{proof}
(a)
 See Theorem \ref{lem:3}(a).

(b) By part (a), $k(\mu)\le n-1$ when $|\sC|=1$.  
It is trivial by definition of $k$ and $\mc{C}$ that, if $k(\mu)\le n-2$, 
then $|\mc{C}|\ge 2$. It suffices, therefore, to show that
$k(\mu)\le n-2$ when $|\sC|\ge 2$. Suppose that $|\sC|\ge 2$.
Without loss of generality we may assume that $\{1,2\}\in \sC$ and either that $\{1,3\}\in \sC$ or 
(in the case $n \ge 4$) that $\{3,4\}\in \sC$.
Let $F=(F_s:s\in \N)$ be an independent sample from $\mu$.
Let $M$ be the Markov time $M=\inf\{t>0: \gvec{F}_t(1)=\gvec{F}_t(2)=1\}$,
and write $J=\{M<\oo\}$.  By irreducibility, $\bmu(J)>0$,
implying that $k(\mu)\le n-1$.
Assume that
\begin{equation}\label{eq:new1002}
k(\mu)=n-1.
\end{equation}
We shall obtain a contradiction, and the conclusion of the lemma will follow.

\emph{Suppose first that  $\{1,2\}, \{1,3\}\in \sC$.}
Let $B$ be the event that there exists $i \ge 3$ such that $\gvec{F}_M(i)\in \{1,2,3\}$. 
On $B\cap J$, we have  $k(F)\le n-2$ a.s., since 
$$
\bmu(\text{at least $3$ states belong to coalescing pairs})>0.
$$ 
Thus $\mu(B\cap J)=0$ by  \eqref{eq:new1002}.
On $\ol{B}\cap J$, 
the $\gvec{F}_M(i)$, $i\ge 3$, are by \eqref{eq:new1002} a.s.\ distinct, 
and in addition take values in $S\setminus\{1,2,3\}$.  
Thus there exist $n-2$ distinct values of $\gvec{F}_M(i)$, $i\ge 3$, but at most $n-3$ values 
that they can take, which is impossible, whence $\mu(\ol{B}\cap J)=0$.  It follows that 
\begin{equation}\label{eq:new100}
0<\bmu(J)=\bmu(B\cap J) + \bmu(\ol B \cap J)=0,
\end{equation}
 a contradiction.

\emph{Suppose secondly that $\{1,2\}, \{3,4\}\in \sC$.}
Let $C$ be the event that either (i) there exists $i \ge 3$ such that
$\gvec{F}_M(i)\in\{1,2\}$, or (ii) $\{\gvec{F}_M(i) : i\ge 3\}\supseteq \{3,4\}$.  
On $C\cap J$, we have $k(F)\le n-2$ a.s.  
On $\ol C\cap J$, by \eqref{eq:new1002} the $\gvec{F}_M(i)$, $i\ge 3$, are a.s.\ distinct, 
and in addition take values in $S\setminus \{1, 2\}$ and no pair of them equals $\{3,4\}$.  
This provides a contradiction as in \eqref{eq:new100}.

(c)
Let $F_1$ have law $\mu$. Write  $A_i=\{F_1(i)\in \{1,2\}\}$,  and 
\begin{equation*}
M=|\{i\le 2: A_i \text{ occurs}\}|, \qq N=|\{i\ge 3: A_i \text{ occurs}\}|.
\end{equation*}
If $\mu(A_i\cap A_j)>0$ for some $i\ge 3$ and $j \ne i$, then 
$\{i,j\}\in \sC$, in contradiction of the assumption that $\sC$ comprises 
the singleton $\{1,2\}$.  
Therefore, $\mu(A_i\cap A_j)=0$ for all $i\ge 3$ and $j\ne i$, and hence
\begin{align}
\label{eq:new1001}\mu(N\ge 2)&=0, \\
\label{M1}\mu(M\ge 1,N=1)&=0.
\end{align}
By similar arguments,
\begin{align}
\label{M2}\mu(M<2,N=0)&=0,\\
\label{M3}\mu(M=1)&=0.
\end{align}
It follows that 
\begin{alignat*}{3}
\mu(N=1)&=\mu(N=1,M=0)  \qq&&\text{by \eqref{M1}}\\
&=\mu(M=0) &&\text{by \eqref{M2} and \eqref{eq:new1001}}\\
&=\mu(\ol A_1 \cap \ol A_2)\\
&=\mu(\ol A_r), \q r=1,2, &&\text{by \eqref{M3}.}
\end{alignat*}
Therefore,
\begin{equation*}
\mu(N=1)=\mu(\ol A_r)=\mu(F_1(r)\ge 3)=\sum_{j=3}^n p_{r,j}, \qq r=1,2.
\end{equation*}
By \eqref{eq:new1001},
\begin{equation*}
\mu(N= 1)=\mu(N) =\sum_{i=3}^n\mu(A_i)=\sum_{i=3}^n(p_{i,1}+p_{i,2}),
\end{equation*}
where $\mu(N)$ is the mean value of $N$. This yields \eqref{eq:new99}.
\qed\end{proof}

The set $K(P)$ can be fairly sporadic, as illustrated in the next two examples.

\begin{example}\label{ex:10}
	Consider the matrix 
	\begin{align}\renewcommand\arraystretch{1.2}
	P=\begin{pmatrix}
	\frac12 & \frac12 & 0\\
	0 & \frac12  & \frac12\\
	\frac12 & 0 & \frac12
	\end{pmatrix}.
	\end{align}
	Since $P$ is doubly stochastic, by Theorem \ref{lem:3}(a),  
	there exists $\mu\in\mc L(P)$ such that $k(\mu)=3$ (one
	may take $\mu(123)=\mu(231)=\frac12$).  By Lemma \ref{lem:8}, we have that $1 \in K(P)$,
	and thus $\{1,3\}\subseteq K(P)$.  We claim that $2\notin K(P)$, and we show this as follows.
	
Let $\mu \in \mc{L}(P)$, with $k(\mu)<3$, so that  $|\sC|\ge 1$. 
	There exists no permutation of $S$ for which the matrix $P$ satisfies \eqref{eq:new99}, 
	whence $|\sC|\ge 2$ by	Proposition \ref{prp:calP}(c).
	By parts (a,\,b) of that proposition, $k(\mu)\le 1$. In conclusion, $K(P)=\{1,3\}$.
	\end{example}

\begin{example}\label{ex:11}
	Consider the matrix
\begin{align}\renewcommand \arraystretch{1.2}
P=\begin{pmatrix}
\frac12 & \frac12 & 0 & 0\\
0 & \frac12  & \frac12 & 0\\
0 & 0 & \frac12 & \frac12\\
\frac12 & 0& 0 & \frac12
\end{pmatrix}.
\end{align}	
We have, as in Example \ref{ex:10}, that $\{1,4\}\subseteq K(P)$.  
Taking $$
\mu(1234)=\mu(2244)=\mu(1331)=\mu(2341)=\tfrac14
$$ 
reveals that $2\in K(P)$, and indeed $\mu$ is a block measure with blocks $\{1,2\}$, $\{3,4\}$.  
As in Example \ref{ex:10}, we have that $3\notin K(P)$, so that $K(P)=\{1,2,4\}$.
\end{example}

We investigate in greater depth the transition matrix on $S$ with equal entries.
Let $|S|=n \ge 2$ and let $P_n=(p_{i,j})$ satisfy 
$p_{i,j}=n^{-1}$ for $i,j\in S=\{1,2,\dots,n\}$.

\begin{theorem}\label{thm:6}
For $n \ge 2$ there exists a block measure $\mu\in\mc L(P_n)$ with
$k(\mu)=l$ if and only if $l \mid n$. In particular,  $K(P_n)\supseteq \{l: l\mid n\}$.
For $n \ge 3$, we have $n-1\notin K(P_n)$.
\end{theorem}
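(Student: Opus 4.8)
The plan is to handle the three claims separately, building on Theorem \ref{thm:7} and Proposition \ref{prp:calP}; throughout, the complete symmetry of $P_n$ (its invariance under every relabelling of $S$) keeps the computations short.

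For the block-measure equivalence I would first record that, for any partition $\sS=\{S_r\}$ of $S$, one has $\lambda_{r,s}^{(i)}=\sum_{j\in S_s}p_{i,j}=|S_s|/n$ independently of $i\in S_r$, so that \eqref{new:40} holds automatically with $\lambda_{r,s}=|S_s|/n$. In the forward direction, if an $\sS$-block measure with $k(\mu)=l$ exists then, as in the discussion preceding Theorem \ref{thm:7}, the induced block matrix $\La=(\lambda_{r,s})$ is doubly stochastic; its column sums give $\sum_{r\in I}\lambda_{r,s}=l|S_s|/n=1$, forcing $|S_s|=n/l$ for every $s$ and hence $l\mid n$. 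For the converse, assuming $l\mid n$, I would split $S$ into $l$ blocks of common size $n/l$ and define $\mu$ by first applying a uniformly random permutation of the blocks and then sending each state independently and uniformly into its image block. A one-line computation gives $\mu(\{f:f(i)=j\})=(1/l)\cdot(l/n)=1/n$, so $\mu\in\mc L(P_n)$, and the block structure is preserved by construction; within a block the independent uniform maps coalesce all states almost surely (exactly as in Example \ref{eq:1}(b)), so $k(\mu)=l$ and $\mu$ is an $\sS$-block measure. This settles the equivalence, and $K(P_n)\supseteq\{l:l\mid n\}$ follows at once.

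For the final claim I would argue by contradiction. If some $\mu\in\mc L(P_n)$ had $k(\mu)=n-1$, then $|\sC|=1$ by Proposition \ref{prp:calP}(b); using the relabelling symmetry of $P_n$ I may take the unique coalescing pair to be $\{1,2\}$, and Proposition \ref{prp:calP}(c) then forces \eqref{eq:new99}. Evaluating its two outer members for $P_n$ gives $\sum_{j=3}^n p_{1,j}=(n-2)/n$ and $\sum_{i=3}^n(p_{i,1}+p_{i,2})=2(n-2)/n$, which are equal only if $n=2$; since $n\ge 3$ this is the required contradiction, so $n-1\notin K(P_n)$.

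The argument is largely an assembly of the earlier results, and I expect the one step needing genuine care to be the converse in the block-measure claim: one must check both that the constructed $\mu$ is genuinely consistent with $P_n$ and that within-block coalescence really does occur, so that $k(\mu)$ equals $l$ rather than dropping below it. The non-membership of $n-1$, by contrast, collapses to the short arithmetic comparison $(n-2)/n\ne 2(n-2)/n$ once Proposition \ref{prp:calP}(c) is in hand.
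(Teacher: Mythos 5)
Your proposal is correct and follows essentially the same route as the paper: the same necessity argument via the induced block matrix $\La$ being doubly stochastic with $\lambda_{r,s}=|S_s|/n$ (forcing $|S_s|=n/l$), the same construction for sufficiency (uniform permutation of equal-sized blocks followed by conditionally independent uniform placement within the image block), and the same arithmetic contradiction $(n-2)/n \ne 2(n-2)/n$ via Proposition \ref{prp:calP}(b,c) for the final claim. Your explicit computation $\mu(\{f:f(i)=j\})=(1/l)\cdot(l/n)=1/n$ and your remark on verifying within-block coalescence are if anything slightly more detailed than the paper's appeal to symmetry and "examination of $\mu$".
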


We do not know whether $K(P_n)=\{l:l\mid n\}$, and neither do we know if
there exists  $\mu\in\mc L(P_n)$ that is not a block measure.

\begin{proof}
Let $n \ge 2$. By Lemma \ref{lem:8} and Theorem \ref{lem:3}, we have that $1,n\in K(P_n)$.
It is easily seen as follows that $l \in K(P_n)$ whenever $l \mid n$. Suppose
$l\mid n$ and $l \ne 1,n$. 
Let 
\begin{align}
S_r&=(r-1)n/l+\{1,2,\dots,n/l\}\nonumber\\
&=\{(r-1)n/l +1,(r-1)n/l+2,\dots, rn/l\}, \qq r=1,2,\dots,l.
\end{align}
We describe next a measure $\mu\in\mc L(P_n)$. 
Let $\Pi$ be a uniformly chosen permutation
of $\{1,2,\dots,l\}$. For $i\in S$, let $Z_i$ be chosen uniformly at random
from $S_{\Pi(i)}$, where the $Z_i$ are conditionally independent given $\Pi$.
Let $\mu$ be the block measure governing the vector $Z=(Z_i:i\in S)$. By symmetry,
\begin{equation*}
q_{i,j} := \mu\bigl(\{f\in\FF_S: f(i)=j\}\bigr), \qq i,j\in S,
\end{equation*}
is constant for all pairs $i,j\in S$. Since $\mu$ is a probability measure, 
$Q=(q_{i,j})$ has row sums $1$, whence $q_{i,j}=n^{-1}=p_{i,j}$,
and therefore $\mu\in \mc L(P_n)$. By examination of $\mu$,
$\mu$ is an $\sS$-block measure.

Conversely, suppose there exists an $\sS$-block measure 
$\mu\in \mc L(P_n)$ with corresponding partition
$\sS=\{S_1,S_2,\dots,S_l\}$ with index set $I=\{1,2,\dots,l\}$. 
By Theorem \ref{thm:7}, equations  \eqref{new:40} and \eqref{new:43} hold.
By \eqref{new:40}, the matrix $\La=(\lambda_{r,s}: r,s \in I)$ satisfies
\begin{equation}\label{eq:31}
\lam_{r,s}= \frac {|S_s|}n, \qq r,s\in I.
\end{equation}
By 
\eqref{new:43},
$$
\frac{|S_s|}{|S_r|}=1,\qq s,r\in I,
$$
whence $|S_s|=n/l$ for all $s\in I$, and in particular $l \mid n$.

Let $n \ge 3$. We prove next that $k(\mu)\ne n-1$ for $\mu \in \mc L(P_n)$.
Let $\sC=\sC(\mu)$ be given as in \eqref{eq:pairs}.  
By Proposition \ref{prp:calP}(b), it suffices to prove that
$|\sC|\ne 1$.
Assume on the contrary that $|\sC|=1$, and suppose without loss of generality that $\sC$ contains the singleton 
pair $\{1,2\}$.  With $P=P_n$, the necessary condition \eqref{eq:new99} becomes
$$
(n-2)\frac 1n = (n-2)\frac 2n,
$$
which is false when $n \ge 3$. Therefore, $|\sC|\ne 1$, and the proof is complete.
\qed\end{proof}

\section*{Acknowledgements}
The authors thank Tim Garoni, Wilfrid Kendall, and an anonymous referee 
for their comments. 
MH was supported by Future Fellowship FT160100166 from the Australian Research Council.
\red{Tabet Aoul Alaa Eddine Islem kindly indicated an error in an earlier version of Theorem \ref{thm:7},
duly corrected in \cite[Thm 5.3]{GH}.}

\end{document}